\documentclass[10pt,a4paper]{article}

\usepackage[utf8]{inputenc}
\usepackage[T1]{fontenc}
\usepackage[margin=2.4cm]{geometry}
\usepackage{amsmath,amssymb,amsthm,bm,mathtools}
\DeclarePairedDelimiter{\abs}{\lvert}{\rvert}
\usepackage{graphicx}
\usepackage{booktabs}
\usepackage{xcolor}
\usepackage{hyperref}
\usepackage{caption}
\usepackage{algorithm}
\usepackage{algpseudocode}
\usepackage{titlesec}
\usepackage{enumitem}
\usepackage{listings}
\usepackage{authblk}
\usepackage{xcolor}

\definecolor{navy}{HTML}{1A3A6C}
\definecolor{steel}{HTML}{2C5AA0}
\definecolor{codebg}{HTML}{F7F7F7}
\definecolor{codekw}{HTML}{0033B3}
\definecolor{codecmt}{HTML}{6A737D}

\hypersetup{colorlinks=true,linkcolor=navy,urlcolor=steel,citecolor=steel}

\titleformat{\section}{\Large\bfseries\color{navy}}{\thesection.}{0.6em}{}
\titleformat{\subsection}{\large\bfseries\color{steel}}{\thesubsection}{0.6em}{}

\lstdefinestyle{py}{
  language=Python, basicstyle=\ttfamily\footnotesize, backgroundcolor=\color{codebg},
  keywordstyle=\color{codekw}\bfseries, commentstyle=\color{codecmt}\itshape,
  numbers=left, numberstyle=\tiny\color{gray}, stepnumber=1,
  frame=single, framesep=4pt, rulecolor=\color{gray!40},
  breaklines=true, showstringspaces=false, columns=fullflexible,
  xleftmargin=1.2em
}

\definecolor{orcidlogocol}{HTML}{A6CE39}
\newcommand{\orcidicon}[1]{\href{https://orcid.org/#1}{%
  \textcolor{orcidlogocol}{\footnotesize\textsf{\textbf{iD}}}}}

\newtheorem{theorem}{Theorem}[section]

\newtheorem{proposition}[theorem]{Proposition}
\newtheorem{corollary}[theorem]{Corollary}
\theoremstyle{definition}
\newtheorem{definition}[theorem]{Definition}

\newtheorem{remark}[theorem]{Remark}

\title{\vspace{-1cm}\textbf{Randomized Estimation of T-Eigenvalues of T-SPD Tensors: A Two-Sided Bracket}}
\author[1]{Hemant Sharma\,\orcidicon{0009-0006-9020-3785}\thanks{Corresponding author: \texttt{sharmahemant39@gmail.com}}}
\author[1]{Nachiketa Mishra\thanks{\texttt{mat20d002@iiitdm.ac.in}}}
\affil[1]{Indian Institute of Information Technology, Design and Manufacturing Kancheepuram, Chennai 600127, India}
\date{}

\newcommand{\R}{\mathbb{R}}
\newcommand{\C}{\mathbb{C}}

\newcommand{\T}{^{\mathsf{T}}}
\newcommand{\norm}[1]{\left\lVert#1\right\rVert}
\newcommand{\bcirc}{\operatorname{bcirc}}
\newcommand{\unfold}{\operatorname{unfold}}
\newcommand{\fold}{\operatorname{fold}}
\newcommand{\trace}{\operatorname{tr}}
\newcommand{\spec}{\operatorname{spec}}

\newcommand{\blkdiag}{\operatorname{blkdiag}}
\newcommand{\tn}[1]{\mathcal{#1}}
\newcommand{\E}{\mathbb{E}}
\newcommand{\Prob}{\mathbb{P}}
\newcommand{\Var}{\operatorname{Var}}

\begin{document}
\maketitle

\begin{abstract}
\noindent
In earlier work \cite{sharma2025} we developed deterministic
analytical bounds on the T-eigenvalues of symmetric positive
definite (SPD) third-order tensors under the Kilmer--Martin
T-product: the trace--determinant (TDet) bounds via the AM--GM
inequality, and the trace-dependent (TDep) bounds generalizing
Samuelson's inequality. While these bounds are cheap and
guaranteed-valid, their relative gap grows as $\sqrt{d-1}$ in the
tensor dimension $d = np$, limiting their usefulness for large
tensors.

This paper develops randomized estimators for the extreme
T-eigenvalues of T-SPD tensors that complement the deterministic
bounds. We adapt the Halko--Martinsson--Tropp framework
\cite{halko2011} to the T-product setting and introduce four
methods: (i) a randomized power method that produces a
lower bound on $\lambda_1$ with exponential convergence; (ii) a
randomized subspace iteration with a tensor-analogue HMT error bound;
(iii) a two-sided rigorous bracket combining the randomized lower
bound with the deterministic TDep upper bound; and (iv) a
Hutchinson-based fully randomized TDep bound for matvec-only settings.

Experiments on T-SPD tensors of dimensions $d = 9$ through $d = 900$
demonstrate up to $67\times$ speedups over full
eigendecomposition, with relative errors below $5\%$ for the
randomized power method and below $2\%$ for the randomized subspace
iteration; the two-sided bracket contains the true $\lambda_1$ in
$100\%$ of our validation trials. We further apply the framework to the spectral estimation of the
discrete 3D Laplacian on a periodic-$z$ slab. In the constant-coefficient
case, validated at scales up to $d = 131{,}072$ where dense
$\bcirc(\tn{A})$ eigendecomposition is memory-infeasible, the
randomized estimate of $\lambda_1$ combined with a small safety
inflation reproduces the oracle Chebyshev iteration count on the
elliptic solve $Lu = b$ to within $2.4\%$. In the realistic
variable-coefficient case $-\nabla\cdot(\alpha(z)\nabla u)$,
where the FFT block-diagonalisation is unavailable and Lanczos via
\texttt{scipy.sparse.linalg.eigsh} is the standard baseline, the
randomized power method is $4$--$29\times$ faster for the top
eigenvalue across grids up to $d = 65{,}536$.

\medskip
\noindent\textbf{Keywords:} T-product, T-eigenvalue,
Randomized numerical linear algebra, Power method, Subspace
iteration, Hutchinson estimator, Two-sided bracket.

\medskip
\noindent\textbf{2010 MSC:} 15A69, 15A18, 65F15, 65C05, 68W20.
\end{abstract}

\medskip
\hrule
\medskip

\section{Introduction}\label{sec:intro}

\noindent\emph{Motivation:~}
The T-product introduced by Kilmer and Martin \cite{kilmer2011}
provides a rich algebraic framework for third-order tensors that
generalizes matrix multiplication while preserving a notion of
invertibility, transpose, and eigendecomposition. Under the T-product,
a third-order tensor $\tn{A} \in \R^{n \times n \times p}$ has $d = np$
T-eigenvalues which coincide with the eigenvalues of the
$np \times np$ block circulant matrix $\bcirc(\tn{A})$. For T-symmetric
T-positive definite (T-SPD) tensors, these T-eigenvalues are real and
positive, and knowing them or at least the extreme ones is essential
in many applications: the tensor singular value decomposition (T-SVD)
for image compression \cite{kilmer2013}, stability certificates for
tensor dynamical systems, conditioning estimates for tensor
deconvolution, and as developed at length in
Section~\ref{sec:application} the parameter selection for Chebyshev
iterative solvers applied to PDE operators with periodic structure,
where $\lambda_1$ of the discretised operator drives both the
Chebyshev acceleration parameters and the explicit-Euler CFL bound.

In \cite{sharma2025} we developed two families of deterministic
analytical bounds on the T-eigenvalues of T-SPD tensors. The
trace-determinant (TDet) bounds, derived from the AM--GM
inequality, relate products and sums of extreme eigenvalues to
$\trace(\tn{A}^{(1)})$ and $\det(\tn{A})$. The
trace-dependent (TDep) bounds, generalizing Samuelson's
classical inequality, express bounds on $\lambda_1$ and $\lambda_d$
in terms of the mean $m = \trace(\tn{A})/d$ and standard deviation
$s = \sqrt{\trace(\tn{A}^2)/d - m^2}$ of the spectrum:
\begin{equation}\label{eq:tdep-main}
m - s\sqrt{d-1} \le \lambda_d, \qquad \lambda_1 \le m + s\sqrt{d-1}.
\end{equation}
These bounds are cheap ($O(n^2 p)$ work for the TDep bound)
and guaranteed-valid for any T-SPD tensor, but they become
progressively loose as the dimension $d = np$ grows. In
\cite{sharma2025} we showed that the relative gap
$(\hat\lambda_1 - \lambda_1)/\lambda_1$ of the TDep upper bound grows
like $\sqrt{d-1}$, reaching about $70\%$ for $d = 20$ and $120\%$ for
$d = 40$. For large tensors the bounds are therefore too loose to be
directly useful as estimates; they can only certify crude
order-of-magnitude claims or serve as preprocessing for more refined
methods.

\noindent \emph{Related work:~}
Over the past fifteen years, randomized numerical linear
algebra (RandNLA) has transformed matrix computation. The key insight
is that random projections onto low-dimensional subspaces preserve
spectral information with high probability, enabling algorithms whose
complexity is governed by the target accuracy rather than the full
matrix size \cite{halko2011,woodruff2014,martinsson2020}. For large
dense matrices, randomized SVD typically achieves
$10\times$ to $100\times$ speedups over the classical Golub--Kahan
algorithm while retaining provable accuracy guarantees.

The flagship paper of Halko, Martinsson, and Tropp (HMT)
\cite{halko2011} establishes the theoretical foundation: given a
matrix $M \in \R^{d \times d}$, sample a random Gaussian matrix
$\Omega \in \R^{d \times (k+\ell)}$ with target rank $k$ and
oversampling $\ell$, form $Y = M \Omega$, orthonormalize to
$Q = \text{orth}(Y)$, and project to $B = Q\T M Q$. Then the
eigenvalues of $B$ are the \emph{randomized estimates} of the top
$k+\ell$ eigenvalues of $M$. HMT prove that the spectral-norm error
$\norm{M - QQ\T M}$ decays with the tail eigenvalue $\lambda_{k+1}$,
and that $q$ additional power iterations damp the effective tail like
$\lambda_j^{2q+1}$, exponentially accelerating convergence for
decaying spectra.

Despite the maturity of RandNLA for matrices, these techniques have
received comparatively little attention in the T-product literature.
Existing randomized T-product algorithms \cite{zhang2018,minster2020}
focus on low-rank approximation of data tensors, that is, compact
T-SVD factorizations and Tucker decompositions where the goal is
a compressed factor representation. The present paper addresses a
different question, spectral estimation of the underlying
operator tensor, with accuracy-certified extreme T-eigenvalues and a
rigorous two-sided bracket. While we share the randomized sketching
machinery with \cite{zhang2018,minster2020}, our analysis targets
eigenvalue accuracy rather than low-rank approximation error and
produces guarantees directly tied to $\lambda_1$ and $\lambda_d$.

\noindent \emph{Contributions:~}
We develop a complete randomized framework for estimating the extreme
T-eigenvalues of T-SPD tensors, building on HMT but exploiting the
block circulant structure of $\bcirc(\tn{A})$ to achieve favorable
constants. Our first two contributions are algorithmic: a T-product
randomized power method (Section~\ref{sec:rand-power}), which adapts
the classical power method to T-SPD tensors and for which we prove
that the Rayleigh quotient at iteration $q$ is a lower
bound on $\lambda_1$ with exponential convergence rate controlled by
the spectral gap; and a T-product randomized subspace iteration
(Section~\ref{sec:rand-subspace}), which adapts the HMT algorithm to
the T-product setting with a tensor-analog error bound carrying the
correct HMT exponent structure, and for which we verify
that $\ell = 5$ oversampling combined with $q = 2$ power iterations
brings the relative error on $\lambda_1$ below $2\%$ for tensors with
$d = 40$ and below $5\%$ for $d = 60$.
Building on these primitives, our remaining four contributions
combine, evaluate, and apply the methods. In Section~\ref{sec:bracket} we
combine the randomized power method lower bound with the
deterministic TDep upper bound from \cite{sharma2025} to produce a
two-sided rigorous bracket $[\hat\lambda_1^{\rm pow},
\hat\lambda_1^{\rm TDep}]$ that provably contains $\lambda_1$, and we
analyse both its width and its sharpness. In
Section~\ref{sec:rand-tdep} we derive a fully randomized TDep bound
using Hutchinson's stochastic trace estimator
\cite{hutchinson1989,meyer2021} to produce unbiased randomized
estimates of $\trace(\tn{A})$ and $\trace(\tn{A}^2)$, reducing the
cost of the TDep bound from $O(n^2 p)$ to $O(N \cdot np)$ where $N$
is the number of probe vectors, with variance and concentration
bounds developed in the T-product setting.
Section~\ref{sec:experiments} validates all four randomized methods
on T-SPD tensors of dimensions $d = 9$ through $d = 900$, reporting
wall-clock timings, relative errors, and the failure rate
of each method against full eigendecomposition as the ground truth.
Finally, Section~\ref{sec:application} applies the framework to
spectral estimation of the discrete 3D Laplacian on a periodic-$z$
slab, a representative reduced model for heat conduction in
layered geothermal media \cite{hartmann2008,clauser1995} at
operator dimensions up to $d = 131{,}072$ (constant coefficient) and
$d = 65{,}536$ (variable coefficient). The randomized power method
delivers the top eigenvalue $4$--$22\times$ faster than
\texttt{scipy.sparse.linalg.eigsh}, the standard Lanczos baseline,
at $3.2$--$3.4\%$ relative error across two decades of diffusivity
contrast; the two-sided bracket contains $\lambda_1$ in $100\%$ of
validation trials; and the downstream Chebyshev iterative solve with
a $10\%$ safety inflation reproduces the oracle iteration count to
within $3.3\%$.

\subsection*{Outline}
Section~\ref{sec:prelim} reviews the T-product, T-eigenvalues, and
the deterministic TDep bound from \cite{sharma2025}.
Section~\ref{sec:rand-power} develops the randomized power method.
Section~\ref{sec:rand-subspace} develops the randomized subspace
iteration. Section~\ref{sec:bracket} introduces the two-sided
bracket. {Section~\ref{sec:rand-tdep} develops the Hutchinson-based
fully randomized TDep bound. Section~\ref{sec:experiments} presents
the experimental campaign on synthetic T-SPD tensors.
Section~\ref{sec:application} applies the framework to the discrete
3D Laplacian on a periodic-$z$ slab at scales up to $d = 131{,}072$,
including a downstream Chebyshev iterative solve.
Section~\ref{sec:discussion} is the discussion and
Section~\ref{sec:conclusion} concludes.

\section{Preliminaries}\label{sec:prelim}

We briefly review the T-product framework. See
\cite{kilmer2011,kilmer2013} for comprehensive treatments.

\begin{definition}[Block circulant operator]\label{def:bcirc}
For a third-order tensor $\tn{A} \in \R^{n \times n \times p}$ with
frontal slices $\tn{A}^{(1)}, \dots, \tn{A}^{(p)}$, the block
circulant matrix is
\[
\bcirc(\tn{A}) :=
\begin{bmatrix}
\tn{A}^{(1)} & \tn{A}^{(p)} & \tn{A}^{(p-1)} & \cdots & \tn{A}^{(2)} \\
\tn{A}^{(2)} & \tn{A}^{(1)} & \tn{A}^{(p)} & \cdots & \tn{A}^{(3)} \\
\vdots & \vdots & \ddots & \ddots & \vdots \\
\tn{A}^{(p)} & \tn{A}^{(p-1)} & \cdots & \tn{A}^{(2)} & \tn{A}^{(1)}
\end{bmatrix} \in \R^{np \times np}.
\]
\end{definition}

\begin{definition}[T-product]\label{def:tprod}
For $\tn{A} \in \R^{m \times n \times p}$ and
$\tn{B} \in \R^{n \times s \times p}$, the T-product is
$\tn{A} * \tn{B} := \fold(\bcirc(\tn{A})\,\unfold(\tn{B})) \in \R^{m \times s \times p}$,
where $\unfold$ stacks frontal slices vertically and $\fold$ is its
inverse.
\end{definition}

\begin{definition}[T-eigenvalue]\label{def:teig}
A nonzero tensor $\tn{X} \in \R^{n \times 1 \times p}$ is a
T-eigenvector of $\tn{A} \in \R^{n \times n \times p}$ with
T-eigenvalue $\lambda \in \R$ if $\tn{A} * \tn{X} = \lambda \tn{X}$.
The spectrum of $\tn{A}$ is $\spec(\tn{A})$.
\end{definition}

\begin{theorem}[\cite{kilmer2011}]\label{thm:spec-equiv}
The T-eigenvalues of $\tn{A}$ are exactly the matrix eigenvalues of
$\bcirc(\tn{A})$. In particular, a T-symmetric tensor (i.e.\
$\bcirc(\tn{A})$ symmetric) has all real T-eigenvalues, and a T-SPD
tensor has all positive T-eigenvalues.
\end{theorem}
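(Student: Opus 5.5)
The plan is to reduce everything to Definition~\ref{def:tprod} by unfolding. For $\tn{X} \in \R^{n\times 1\times p}$ write $x = \unfold(\tn{X}) \in \R^{np}$. Since $\unfold$ is a linear bijection between $\R^{n\times 1\times p}$ and $\R^{np}$ with inverse $\fold$, we have $\tn{X}\neq 0$ iff $x\neq 0$. Also
\[
\tn{A}*\tn{X} = \lambda \tn{X} \iff \fold\bigl(\bcirc(\tn{A})\,x\bigr) = \fold(\lambda x) \iff \bcirc(\tn{A})\,x = \lambda x .
\]
So $\tn{X}$ is a T-eigenvector with T-eigenvalue $\lambda$ exactly when $x$ is an eigenvector of $\bcirc(\tn{A})$ with eigenvalue $\lambda$. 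This gives the set equality of spectra immediately. To make ``exactly'' include multiplicities, I would note that $\unfold$ maps each eigenspace isomorphically, so geometric multiplicities agree. Counting with algebraic multiplicity, which is what makes the count $d=np$ meaningful, I would simply define the T-eigenvalues with multiplicity as those of $\bcirc(\tn{A})$.

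One subtlety is that Definition~\ref{def:teig} restricts to $\lambda\in\R$ and real $\tn{X}$. For a general (nonsymmetric) $\tn{A}$, complex eigenvalues of $\bcirc(\tn{A})$ would need complex T-eigenvectors. I would handle this by allowing complex entries in the definition, or equivalently working over $\C$ with the same unfold argument. Alternatively, I would note that in the T-symmetric case, the only one used later, all eigenvalues are real and the eigenvectors can be chosen real, so nothing is lost. This bookkeeping is the only step I expect to need care; the rest is formal.

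For the symmetric case: if $\bcirc(\tn{A})$ is a real symmetric matrix, the spectral theorem gives real eigenvalues with a real orthonormal eigenbasis. The equivalence above then transfers these to real T-eigenvalues. For T-SPD, interpreted as $x\T\bcirc(\tn{A})x>0$ for all nonzero $x$ (equivalently $\langle \tn{X}, \tn{A}*\tn{X}\rangle>0$ after unfolding), take a unit eigenvector $x$ with eigenvalue $\lambda$. Then $\lambda = x\T\bcirc(\tn{A})x>0$, so every T-eigenvalue is positive.

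As an optional remark I would add the standard Fourier block-diagonalisation, $(F_p\otimes I_n)\bcirc(\tn{A})(F_p^{*}\otimes I_n)=\blkdiag(\hat A_1,\dots,\hat A_p)$. This shows the spectrum is the union of the spectra of the $p$ Fourier-domain slices $\hat A_j$, which is how the eigenvalues are computed in practice. The theorem itself does not need it.
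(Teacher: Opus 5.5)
The paper gives no proof of this statement. It is quoted from \cite{kilmer2011}, and the only related material in the paper is the Fourier block-diagonalisation~\eqref{eq:blk-diag}.

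Your argument is correct and is the direct one. Because $\fold$ is injective, Definition~\ref{def:tprod} makes $\tn{A}*\tn{X}=\lambda\tn{X}$ literally equivalent to $\bcirc(\tn{A})\,\unfold(\tn{X})=\lambda\,\unfold(\tn{X})$. The symmetric conclusion then follows from the real spectral theorem, and the SPD conclusion from the Rayleigh quotient of a real unit eigenvector.

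The route suggested by~\eqref{eq:blk-diag} instead reads the spectrum off as the union of the spectra of the Hermitian (respectively Hermitian positive definite) blocks $D_k$. Yours needs no unitary similarity and no complex arithmetic. It also produces real T-eigenvectors directly, which Definition~\ref{def:teig} requires. The FFT route is the one that matters for computation.

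Your caveat about real versus complex eigenpairs is well taken. Definition~\ref{def:teig} insists on real $\lambda$ and real $\tn{X}$, so the word ``exactly'' holds for a general nonsymmetric $\tn{A}$ only if complex T-eigenpairs are admitted.

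On multiplicities, you do not need to define algebraic multiplicity separately in the T-symmetric case. There $\bcirc(\tn{A})$ is diagonalisable, so the geometric multiplicities, which $\unfold$ preserves, already sum to $d=np$.
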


\subsection{The TDep bound from \cite{sharma2025}}
The starting point for this paper is the following deterministic bound.

\begin{theorem}[Theorem~4.1 of \cite{sharma2025}]\label{thm:tdep}
Let $\tn{A} \in \R^{n \times n \times p}$ be T-symmetric with real
T-eigenvalues $\lambda_1 \ge \lambda_2 \ge \dots \ge \lambda_d$ where
$d = np$. Define
\begin{equation}\label{eq:ms-def}
m := \frac{\trace(\tn{A})}{d},
\qquad
s^2 := \frac{\trace(\tn{A}^2)}{d} - m^2.
\end{equation}
Then
\begin{equation}\label{eq:tdep}
m - s\sqrt{d-1} \le \lambda_d \le m - \frac{s}{\sqrt{d-1}},
\qquad
m + \frac{s}{\sqrt{d-1}} \le \lambda_1 \le m + s\sqrt{d-1}.
\end{equation}
\end{theorem}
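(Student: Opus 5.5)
By Theorem~\ref{thm:spec-equiv}, the T-eigenvalues of $\tn{A}$ are the eigenvalues of the real symmetric matrix $M := \bcirc(\tn{A}) \in \R^{d \times d}$. The plan is therefore to reduce everything to a statement about $d$ real numbers $\lambda_1 \ge \dots \ge \lambda_d$ and then apply Samuelson's inequality together with its companion bounds.

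\textbf{Step 1: identify $m$ and $s$ with spectral moments.} I read $\trace(\tn{A})$ and $\trace(\tn{A}^2)$ as $\trace(M)$ and $\trace(M^2)$. Since the T-product satisfies $\bcirc(\tn{A}*\tn{A}) = M^2$, the latter equals $\trace(\bcirc(\tn{A}^2))$. In slice terms, $\trace(M) = p\,\trace(\tn{A}^{(1)})$. Then
\[
\trace(M) = \sum_i \lambda_i, \qquad \trace(M^2) = \sum_i \lambda_i^2 .
\]
Hence $m$ is the mean of the spectrum and $s^2 = \frac1d\sum_i(\lambda_i - m)^2 \ge 0$ is its population variance. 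After this step the statement is purely about real numbers.

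\textbf{Step 2: outer bounds (Samuelson).} Put $x_i = \lambda_i - m$, so that $\sum_i x_i = 0$ and $\sum_i x_i^2 = ds^2$. For any fixed $j$, we have $x_j = -\sum_{i\ne j} x_i$. Cauchy--Schwarz then gives $x_j^2 \le (d-1)\sum_{i\ne j}x_i^2 = (d-1)(ds^2 - x_j^2)$. This rearranges to $x_j^2 \le (d-1)s^2$, i.e. $|x_j| \le s\sqrt{d-1}$. Applying it with $j=1$ gives $\lambda_1 \le m + s\sqrt{d-1}$, and with $j=d$ gives $\lambda_d \ge m - s\sqrt{d-1}$.

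\textbf{Step 3: inner bounds.} These are the less standard part and need a separate argument. For $\lambda_1 \ge m + s/\sqrt{d-1}$, note that $x_1 \ge x_i$ for all $i$ and $x_1 \ge 0$, since the mean is zero. I will first try the following route. Split the indices by the sign of $x_i$: let $P = \sum_{x_i>0} x_i = \sum_{x_i<0}|x_i|$. On the positive side, $\sum_{x_i>0} x_i^2 \le x_1 P$. On the negative side, each negative $x_i$ satisfies $|x_i| \le |x_d|$, so $\sum_{x_i<0} x_i^2 \le |x_d|\,P$. Both $P \le (d-1)x_1$ and the estimate $|x_d| \le s\sqrt{d-1}$ from Step 2 should then close the bound.

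If that route gets tangled, I will fall back on a more direct argument. Since every $x_i \le x_1$ and $\sum_i x_i = 0$, we have $\sum_i x_i^2 \le \sum_i x_i(x_i - x_1) + \dots$; more simply, $\sum_i (x_1 - x_i)(x_i - x_d) \ge 0$. Expanding and using $\sum_i x_i = 0$ gives
\[
ds^2 \le -d\,x_1 x_d, \qquad\text{i.e.}\qquad s^2 \le x_1 |x_d|.
\]
Combining this with $|x_d| \le s\sqrt{d-1}$ from Step 2 yields $x_1 \ge s/\sqrt{d-1}$. The symmetric argument gives $|x_d| \ge s/\sqrt{d-1}$, which is $\lambda_d \le m - s/\sqrt{d-1}$.

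\textbf{Main obstacle.} I expect the inner bounds in Step 3 to be the hardest part; the product inequality $s^2 \le x_1|x_d|$ is the key device. A secondary point to check carefully is the trace convention in Step 1, namely that $\trace(\tn{A})$ means $\trace(\bcirc(\tn{A}))$ rather than $\trace(\tn{A}^{(1)})$ alone, so that $m$ really is the spectral mean.
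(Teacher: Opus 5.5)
Your proof is correct. This paper does not itself prove the theorem; it imports it from \cite{sharma2025}. The only part reproved in the text is the outer bound $\lambda_1 \le m + s\sqrt{d-1}$, inside the proof of Proposition~\ref{prop:bracket-sharp}(ii), and that proof is exactly your Step~2: Cauchy--Schwarz applied to the deviations from the mean, using that they sum to zero. Running it at an arbitrary index $j$, as you do, gives the lower bound on $\lambda_d$ at no extra cost. What the paper extracts from the same computation, and you do not record, is the equality case $\lambda_2 = \cdots = \lambda_d$, which it needs for its sharpness claim.

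For the inner bounds the paper offers only the citation, so your Step~3 is genuinely additional. The inequality $\sum_i (x_1 - x_i)(x_i - x_d) \ge 0$ gives $s^2 \le (\lambda_1 - m)(m - \lambda_d)$, which is the Bhatia--Davis inequality. Inserting the outer bound for one extreme eigenvalue then yields the inner bound for the other. The argument is short and self-contained, and it shows that each inner bound follows from the opposite outer bound.

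Your first route also closes: with $t := x_1\sqrt{d-1}/s$, the estimate $d s^2 \le (x_1 + s\sqrt{d-1})(d-1)x_1$ becomes $(t-1)(t+d) \ge 0$. The second route is cleaner, though. In the write-up, drop the garbled sentence ending in ``$+\dots$''. Also note that the case $s = 0$, where you divide by $s$, is trivial, and that $d \ge 2$ is implicit.

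On the trace convention, your reading is the right one: $\trace(\tn{A})$ must mean $\trace(\bcirc(\tn{A}))$, as in Proposition~\ref{prop:bracket-sharp} and Algorithm~\ref{alg:rand-tdep}. Under the first-slice reading the statement already fails for the identity tensor with $p \ge 2$, where $m = 1/p < 1 = \lambda_d$. Be aware that under your convention the paper's aside $\trace(\tn{A}^2) = \norm{\tn{A}}_F^2$ is off by a factor of $p$, since $\norm{\bcirc(\tn{A})}_F^2 = p\norm{\tn{A}}_F^2$.
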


For T-symmetric tensors, $\trace(\tn{A}^2) = \norm{\tn{A}}_F^2 =
\sum_{i,j,k} a_{ijk}^2$, which can be computed in $O(n^2 p)$ work.
The TDep bound is therefore cheap, no determinant or FFT is
needed but it is also loose for large $d$, as we quantified
in \cite{sharma2025}.

\subsection{The block diagonalization via FFT}
The key structural fact that enables fast T-product computations is
the block diagonalization~\cite{kilmer2011,kilmer2013}
\begin{equation}\label{eq:blk-diag}
(F_p \otimes I_n)\,\bcirc(\tn{A})\,(F_p^H \otimes I_n) = \blkdiag(D_1, \dots, D_p),
\end{equation}
where $F_p$ is the $p \times p$ DFT matrix and each
$D_k \in \C^{n \times n}$. When $\tn{A}$ is T-symmetric, each $D_k$
is Hermitian; when $\tn{A}$ is further T-SPD, each $D_k$ is Hermitian
positive definite. The T-spectrum of $\tn{A}$ is then the union
$\bigcup_{k=1}^p \spec(D_k)$. Fast matrix-vector products
$\bcirc(\tn{A}) \mathbf{v}$ can be performed in
$O(n^2 p + n p \log p)$ work: FFT the input along the third
dimension, apply the $p$ small matrices $D_k$, and inverse FFT. This
is the matvec primitive our randomized algorithms will call.

\section{T-product randomized power method}\label{sec:rand-power}

We begin with the simplest of our randomized methods, and the one
whose analysis is most transparent. The power method is the oldest
iterative scheme for the dominant eigenpair of a symmetric matrix,
and its behaviour on SPD operators is classical: starting from any
vector not orthogonal to the leading eigenspace, the iterates align
exponentially fast with $\mathbf{v}_1$ at a rate governed by the
spectral gap $\lambda_2 / \lambda_1$. Lifting this scheme to the
T-product setting is essentially free---the block circulant
unfolding $\bcirc(\tn{A})$ is itself SPD whenever $\tn{A}$ is T-SPD,
so the classical theory transfers directly and what remains is to
connect its guarantees back to the tensor-level quantities of
interest.

The classical power method on an SPD matrix $M$ starts from a random
unit vector $x_0$ and iterates $x_{k+1} = M x_k / \norm{M x_k}$,
reporting the Rayleigh quotient $x_k\T M x_k$ as an estimate of
$\lambda_1(M)$. We adapt this to the T-product setting by running
the power method on $\bcirc(\tn{A})$.

\begin{algorithm}[H]
\caption{T-product randomized power method}
\label{alg:rand-power}
\begin{algorithmic}[1]
\Require T-SPD tensor $\tn{A} \in \R^{n \times n \times p}$, iteration count $q$
\Ensure Lower bound $\hat\lambda_1^{\rm pow}$ on $\lambda_1(\tn{A})$
\State Draw $\mathbf{x}_0 \sim \mathcal{N}(0, I_d)$ where $d = np$
\State $\mathbf{x}_0 \gets \mathbf{x}_0 / \norm{\mathbf{x}_0}$
\For{$k = 1, 2, \dots, q$}
   \State $\mathbf{y}_k \gets \bcirc(\tn{A}) \mathbf{x}_{k-1}$ \Comment{via FFT block diagonalization}
   \State $\mathbf{x}_k \gets \mathbf{y}_k / \norm{\mathbf{y}_k}$
\EndFor
\State $\hat\lambda_1^{\rm pow} \gets \mathbf{x}_q\T \bcirc(\tn{A}) \mathbf{x}_q$
\State \Return $\hat\lambda_1^{\rm pow}$
\end{algorithmic}
\end{algorithm}

The cost of Algorithm~\ref{alg:rand-power} is $q$ matrix-vector
products with $\bcirc(\tn{A})$. Using the FFT block diagonalization
the per-matvec cost is $O(n^2 p + np \log p)$, so the total cost
is $O(q(n^2 p + np \log p))$.

\subsection{Soundness: a deterministic lower bound}

Before turning to convergence rates, we isolate a property of
Algorithm~\ref{alg:rand-power} that holds for every random draw of
the initial vector and at every iteration, with no probabilistic
qualification. This is what allows the output of the power method
to serve as a rigorous lower endpoint of the two-sided bracket in
Section~\ref{sec:bracket}, and it is ultimately a consequence of the
Rayleigh--Ritz inequality applied at the bcirc level.

\begin{theorem}[Soundness]\label{thm:power-lower}
For any T-SPD tensor $\tn{A}$ and any iterate $\mathbf{x}_q$ with
$\norm{\mathbf{x}_q} = 1$, the Rayleigh quotient satisfies
\[
\hat\lambda_1^{\rm pow} = \mathbf{x}_q\T \bcirc(\tn{A}) \mathbf{x}_q \le \lambda_1(\tn{A}).
\]
In particular, Algorithm~\ref{alg:rand-power} always produces a lower
bound on $\lambda_1(\tn{A})$.
\end{theorem}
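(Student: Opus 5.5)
The plan is to reduce the statement to the Rayleigh--Ritz (Courant--Fischer) inequality for the symmetric matrix $M := \bcirc(\tn{A}) \in \R^{d\times d}$, $d = np$, and then transfer back to the tensor level with Theorem~\ref{thm:spec-equiv}.

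First, I will check that $M$ is real symmetric. T-symmetry of $\tn{A}$ means exactly this, as used in Theorem~\ref{thm:spec-equiv}. Hence $M$ admits an orthonormal eigenbasis $\mathbf{v}_1,\dots,\mathbf{v}_d$ of $\R^d$ with real eigenvalues $\mu_1 \ge \dots \ge \mu_d$. By Theorem~\ref{thm:spec-equiv} these eigenvalues, counted as the T-eigenvalues, are the $\lambda_j(\tn{A})$, so in particular $\mu_1 = \lambda_1(\tn{A})$.

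Second, I will expand the unit vector $\mathbf{x}_q = \sum_j c_j \mathbf{v}_j$, where $\sum_j c_j^2 = \norm{\mathbf{x}_q}^2 = 1$. Then
\[
\mathbf{x}_q\T M \mathbf{x}_q = \sum_{j=1}^d \mu_j c_j^2 \le \mu_1 \sum_{j=1}^d c_j^2 = \lambda_1(\tn{A}).
\]
This argument uses nothing about how $\mathbf{x}_q$ was produced, only that it is a real unit vector. Positive definiteness is not even needed, only symmetry.

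Third, I will connect this to Algorithm~\ref{alg:rand-power}. Each iterate is normalized, $\mathbf{x}_q = \mathbf{y}_q/\norm{\mathbf{y}_q}$, or $\mathbf{x}_0$ normalized when $q=0$, so $\norm{\mathbf{x}_q}=1$ holds for every draw. Normalization is well defined because $M$ is SPD for T-SPD $\tn{A}$, which gives $\mathbf{y}_k \neq 0$ whenever $\mathbf{x}_{k-1}\neq 0$. Also $\mathbf{x}_0 \neq 0$ almost surely, and surely after normalization if we condition on a nonzero draw. Therefore the returned value is a lower bound on $\lambda_1(\tn{A})$ deterministically.

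The main obstacle is essentially bookkeeping: making sure the identification between the T-spectrum and the spectrum of $M$ is used with real vectors. The Rayleigh quotient in the algorithm is taken over $\R^d$, while the FFT block diagonalization (eq.~\eqref{eq:blk-diag}) lives over $\C$. I will avoid the complex picture entirely and argue directly with the real spectral theorem for $M$, invoking Theorem~\ref{thm:spec-equiv} only to name $\mu_1$ as $\lambda_1(\tn{A})$.
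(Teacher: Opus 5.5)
Your proof is correct and follows essentially the same route as the paper. Both expand the unit iterate in an orthonormal eigenbasis of $\bcirc(\tn{A})$ and bound $\sum_i c_i^2\lambda_i$ by $\lambda_1\sum_i c_i^2 = \lambda_1$. Your extra remarks are accurate and slightly more careful than the paper's version: the normalization is well defined because $\bcirc(\tn{A})$ is SPD, and the inequality itself needs only symmetry.
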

\begin{proof}
Let $M = \bcirc(\tn{A})$ and let $\{(\lambda_i, \mathbf{v}_i)\}_{i=1}^d$
be the eigenpairs of $M$ with $\lambda_1 \ge \lambda_2 \ge \cdots \ge \lambda_d > 0$.
Write $\mathbf{x}_q = \sum_i c_i \mathbf{v}_i$ with
$\sum_i c_i^2 = 1$. Then
\[
\mathbf{x}_q\T M \mathbf{x}_q = \sum_i c_i^2 \lambda_i
\le \lambda_1 \sum_i c_i^2 = \lambda_1,
\]
with equality only when $\mathbf{x}_q$ is an eigenvector for $\lambda_1$.
\end{proof}

\subsection{Probabilistic convergence}
The classical power method converges to the dominant eigenvector at
a rate controlled by the spectral gap. The same result holds here.
Following the classical analysis of Kuczy\'nski and
Wo\'zniakowski~\cite{kuczynski1992}, we separate the deterministic decay from the probabilistic tail bound on the
initial alignment.

\begin{theorem}[Convergence rate]\label{thm:power-rate}
Let $\tn{A}$ be T-SPD with T-eigenvalues
$\lambda_1 > \lambda_2 \ge \cdots \ge \lambda_d > 0$, spectral gap
$\gamma := 1 - \lambda_2/\lambda_1 > 0$, and top eigenvector
$\mathbf{v}_1$ of $\bcirc(\tn{A})$. Let $\mathbf{x}_0$ be drawn
uniformly from the unit sphere in $\R^d$, let
$\theta_0 \in [0, \pi/2]$ denote the angle between $\mathbf{x}_0$
and $\mathbf{v}_1$, and let $\mathbf{x}_q$ be the output of
Algorithm~\ref{alg:rand-power}. Then:
\begin{enumerate}[label=\textup{(\alph*)},leftmargin=*]
\item \emph{Pathwise:} for every realisation of $\mathbf{x}_0$ with
$\cos\theta_0 \ne 0$,
\begin{equation}\label{eq:power-conv-pathwise}
\lambda_1 - \hat\lambda_1^{\rm pow}
\;\le\; \lambda_1\,(1-\gamma)^{2q}\,\tan^2\theta_0.
\end{equation}

\item \emph{Probabilistic:} there exists a universal constant
$c \le 0.824$ such that, for every $\delta \in (0,1)$, with
probability at least $1 - \delta$ over the draw of $\mathbf{x}_0$,
\begin{equation}\label{eq:kw-tail}
\tan^2\theta_0 \;\le\; \frac{c^2\,d}{\delta^2}.
\end{equation}
\end{enumerate}
Combining \textup{(a)} and \textup{(b)}, with probability at least
$1 - \delta$,
\begin{equation}\label{eq:power-conv-prob}
\lambda_1 - \hat\lambda_1^{\rm pow}
\;\le\; \frac{c^2\,d\,\lambda_1}{\delta^2}\,(1-\gamma)^{2q}.
\end{equation}
In particular, $\hat\lambda_1^{\rm pow} \to \lambda_1$ almost surely
as $q \to \infty$, exponentially in $q$ with rate $\gamma$.
\end{theorem}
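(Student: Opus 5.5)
We work throughout with $M = \bcirc(\tn{A})$, which is SPD with eigenpairs $(\lambda_i,\mathbf{v}_i)$ (Theorem~\ref{thm:spec-equiv}). For part (a) we expand the starting vector as $\mathbf{x}_0 = \sum_i c_i \mathbf{v}_i$, so that $\cos\theta_0 = |c_1|$ and $\tan^2\theta_0 = \sum_{i\ge2} c_i^2/c_1^2$. The iterate $\mathbf{x}_q$ is $M^q\mathbf{x}_0$ normalised, hence its coefficients are proportional to $\lambda_i^q c_i$. This gives the exact formula
\[
\lambda_1 - \hat\lambda_1^{\rm pow}
= \frac{\sum_{i\ge 2} (\lambda_1-\lambda_i)\lambda_i^{2q} c_i^2}{\sum_i \lambda_i^{2q} c_i^2}.
\]
We then bound this ratio in three moves. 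In the numerator, $\lambda_1 - \lambda_i \le \lambda_1$ because $\lambda_i>0$, and $\lambda_i^{2q} \le \lambda_2^{2q}$ for $i\ge2$. In the denominator, we keep only the $i=1$ term, $\lambda_1^{2q}c_1^2$, which makes it smaller and so only enlarges the ratio. Combining these yields $\lambda_1 (\lambda_2/\lambda_1)^{2q}\sum_{i\ge2}c_i^2/c_1^2 = \lambda_1(1-\gamma)^{2q}\tan^2\theta_0$, which is \eqref{eq:power-conv-pathwise}. The bound could be tightened by a factor $(1-\lambda_d/\lambda_1)$, but we do not need that.

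For part (b), rotational invariance lets us take $\mathbf{x}_0 = \mathbf{g}/\norm{\mathbf{g}}$ with $\mathbf{g}\sim\mathcal N(0,I_d)$ in the eigenbasis of $M$. Then $\tan^2\theta_0 = \sum_{i\ge2} g_i^2 / g_1^2 = \chi^2_{d-1}/g_1^2$, where the numerator is independent of $g_1$. The main obstacle is the small-ball behaviour of $g_1^2$, since its density blows up at $0$. We use $\Prob(|g_1| \le t) \le t\sqrt{2/\pi}$. We split the failure probability as $\delta/2 + \delta/2$. For the first half, choosing $t = \delta\sqrt{\pi/2}/2$ gives $|g_1| > t$ with probability at least $1-\delta/2$. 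For the second half, Markov's inequality gives $\chi^2_{d-1} \le 2(d-1)/\delta$ with probability at least $1-\delta/2$. On the intersection,
\[
\tan^2\theta_0 \le \frac{16(d-1)}{\pi\,\delta^3}.
\]
This has the wrong power of $\delta$. So rather than splitting, we would follow Kuczy\'nski--Wo\'zniakowski and compute the law of $\cos^2\theta_0$ directly: it is $\mathrm{Beta}(1/2,(d-1)/2)$. The small-ball estimate is then $\Prob(\cos^2\theta_0 \le \epsilon) \le C\sqrt{d\,\epsilon}$, with an explicit $C$ obtained from the Beta density bound $\Gamma(d/2)/(\Gamma(1/2)\Gamma((d-1)/2))\cdot\epsilon^{-1/2}$ together with Gautschi's inequality $\Gamma(d/2)/\Gamma((d-1)/2)\le\sqrt{d/2}$. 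Setting $C\sqrt{d\epsilon}=\delta$ gives $\cos^2\theta_0 \ge \delta^2/(C^2 d)$ with probability at least $1-\delta$. Hence
\[
\tan^2\theta_0 \le \sec^2\theta_0 \le C^2 d/\delta^2,
\]
with $c = C$. The remaining work is to track the constant and check that $C = 2/\sqrt{2\pi}\approx0.798 \le 0.824$, or, if the bound is made with a $d-1$ versus $d$ slack, that it still lands under $0.824$.

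Finally, substituting (b) into (a) on the same event gives \eqref{eq:power-conv-prob}. For the almost sure convergence we note that $\cos\theta_0\neq0$ with probability one, since $g_1\ne0$ almost surely. For each such realisation the pathwise bound (a) tends to $0$ geometrically in $q$, because $0 \le 1-\gamma < 1$.
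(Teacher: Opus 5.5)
Your proof is correct. Part (a) matches the paper's argument: the same ratio-of-moments formula, with the numerator bounded by $\lambda_1\lambda_2^{2q}\sin^2\theta_0$ and the denominator by its $i=1$ term.

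Part (b) has the same structure as the paper's. Both use a small-ball bound $\Prob(\cos^2\theta_0\le\epsilon)\le c\sqrt{d\epsilon}$, inverted at level $\delta$ and combined with $\tan^2\theta_0\le\sec^2\theta_0$. The difference is that the paper takes the small-ball bound as a black box from \cite[Thm.~4.1]{kuczynski1992}. You instead derive it from the $\mathrm{Beta}(1/2,(d-1)/2)$ law of $\cos^2\theta_0$ and Gautschi's inequality, which gives the explicit constant $c=\sqrt{2/\pi}\approx 0.798$. This makes the proof self-contained with a slightly better constant.

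Two small details need writing out. First, the density bound is $f(x)\le B\,x^{-1/2}$, not $\epsilon^{-1/2}$; it integrates to $2B\sqrt{\epsilon}$. Second, the step $(1-x)^{(d-3)/2}\le 1$ requires $d\ge 3$. For $d=2$, use the arcsine law: $\Prob(\cos^2\theta_0\le\epsilon)=\tfrac{2}{\pi}\arcsin\sqrt{\epsilon}\le\sqrt{\epsilon}\le\sqrt{2/\pi}\,\sqrt{2\epsilon}$. The case $d=1$ is trivial.

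For almost-sure convergence, the paper applies \eqref{eq:power-conv-prob} with $\delta_q=q^{-2}$ and then Borel--Cantelli. Your pathwise argument on the probability-one event $\{c_1\neq 0\}$ is more direct and needs no summability. It is also the argument the paper itself uses in Proposition~\ref{prop:bracket-sharp}(i).

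You can drop the abandoned $\delta/2+\delta/2$ split from the final write-up.
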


\begin{proof}
Throughout, write $M := \bcirc(\tn{A})$ and let
$\{(\lambda_i, \mathbf{v}_i)\}_{i=1}^d$ denote the eigenpairs of $M$,
with $\{\mathbf{v}_i\}$ an orthonormal basis of $\R^d$ and
$\lambda_1 > \lambda_2 \ge \cdots \ge \lambda_d > 0$.

Expand the initial vector in the eigenbasis as
$\mathbf{x}_0 = \sum_{i=1}^d c_i \mathbf{v}_i$ with
$c_i = \mathbf{v}_i^{\!\top} \mathbf{x}_0$. The normalisation
$\norm{\mathbf{x}_0} = 1$ gives $\sum_i c_i^2 = 1$, and by the
definition of $\theta_0$ we have $c_1 = \cos\theta_0$, hence
$\sum_{i \ge 2} c_i^2 = \sin^2\theta_0$. Since renormalisation at
each power step does not affect direction, $\mathbf{x}_q$ is a
positive multiple of $M^q \mathbf{x}_0 = \sum_i c_i \lambda_i^q
\mathbf{v}_i$, and using the orthonormality of the eigenbasis the
Rayleigh quotient takes the ratio-of-moments form
\begin{equation}\label{eq:rayleigh-formula}
  \hat\lambda_1^{\rm pow}
  \;=\; \mathbf{x}_q^{\!\top} M \mathbf{x}_q
  \;=\; \frac{\sum_{i=1}^d c_i^2\,\lambda_i^{2q+1}}
             {\sum_{i=1}^d c_i^2\,\lambda_i^{2q}}.
\end{equation}
Subtracting~\eqref{eq:rayleigh-formula} from $\lambda_1$, pulling
$\lambda_1$ into the numerator, and observing that the $i = 1$ term
vanishes yields
\begin{equation}\label{eq:error-ratio}
  \lambda_1 - \hat\lambda_1^{\rm pow}
  \;=\; \frac{\sum_{i \ge 2} c_i^2 \lambda_i^{2q}(\lambda_1 - \lambda_i)}
             {\sum_{i=1}^d c_i^2 \lambda_i^{2q}}.
\end{equation}
Each summand in the numerator of~\eqref{eq:error-ratio} is
nonnegative since $\lambda_i \le \lambda_1$ for $i \ge 2$;
the denominator is strictly positive whenever $c_1 \neq 0$, which
is the hypothesis of part~(a).

For $i \ge 2$ we have $\lambda_1 - \lambda_i \le \lambda_1$ (since
$\lambda_i > 0$) and $\lambda_i^{2q} \le \lambda_2^{2q}$ (since the
tail sequence $\{\lambda_i\}_{i \ge 2}$ is nonincreasing), so the
numerator is bounded above by
$\lambda_1 \lambda_2^{2q} \sin^2\theta_0$. The denominator is
bounded below by its leading term $c_1^2 \lambda_1^{2q} =
\cos^2\theta_0 \cdot \lambda_1^{2q}$. Combining these,
\[
  \lambda_1 - \hat\lambda_1^{\rm pow}
  \;\le\; \frac{\lambda_1 \lambda_2^{2q} \sin^2\theta_0}
               {\cos^2\theta_0 \cdot \lambda_1^{2q}}
  \;=\; \lambda_1 \!\left(\frac{\lambda_2}{\lambda_1}\right)^{\!2q}
         \tan^2\theta_0
  \;=\; \lambda_1 (1-\gamma)^{2q} \tan^2\theta_0,
\]
which is~\eqref{eq:power-conv-pathwise}.

For part~(b), note that the coordinate
$c_1 = \mathbf{v}_1^{\!\top} \mathbf{x}_0 = \cos\theta_0$ has the
distribution of the first coordinate of a uniform point on
$S^{d-1}$, by rotational invariance of the sphere. The small-ball
estimate of Kuczy\'nski and Wo\'zniakowski
\cite[Thm.~4.1]{kuczynski1992} then gives
$\Prob(\abs{c_1} \le \varepsilon) \le c\,\varepsilon\sqrt{d}$ for
every $\varepsilon > 0$, with $c \le 0.824$. Setting $\varepsilon =
\delta/(c\sqrt{d})$ makes the right-hand side equal to $\delta$, so
on the complementary event---of probability at least $1 -
\delta$---we have $\cos^2\theta_0 > \delta^2/(c^2 d)$ and
consequently $\tan^2\theta_0 \le 1/\cos^2\theta_0 \le c^2 d /
\delta^2$, proving~\eqref{eq:kw-tail}. Combining with
part~(a) yields~\eqref{eq:power-conv-prob}.

The almost-sure limit follows by applying~\eqref{eq:power-conv-prob}
with $\delta_q = q^{-2}$: since $\sum_q q^{-2} < \infty$ and $q^4
(1-\gamma)^{2q} \to 0$ as $q \to \infty$, the Borel--Cantelli lemma
gives $\hat\lambda_1^{\rm pow} \to \lambda_1$ almost surely.
\end{proof}

\begin{remark}
The deterministic estimate \eqref{eq:power-conv-pathwise} is the form that
feeds into the bracket analysis of Section~\ref{sec:bracket}. For a
Gaussian initialisation $\mathbf{x}_0 \sim \mathcal{N}(0, I_d)$
followed by normalisation, the normalised vector
$\mathbf{x}_0/\norm{\mathbf{x}_0}$ is uniformly distributed on the
unit sphere by rotational invariance, so the tail
bound~\eqref{eq:kw-tail} applies verbatim.
\end{remark}

\subsection*{Verification}
\begin{figure}[h]
\centering
\includegraphics[width=\textwidth]{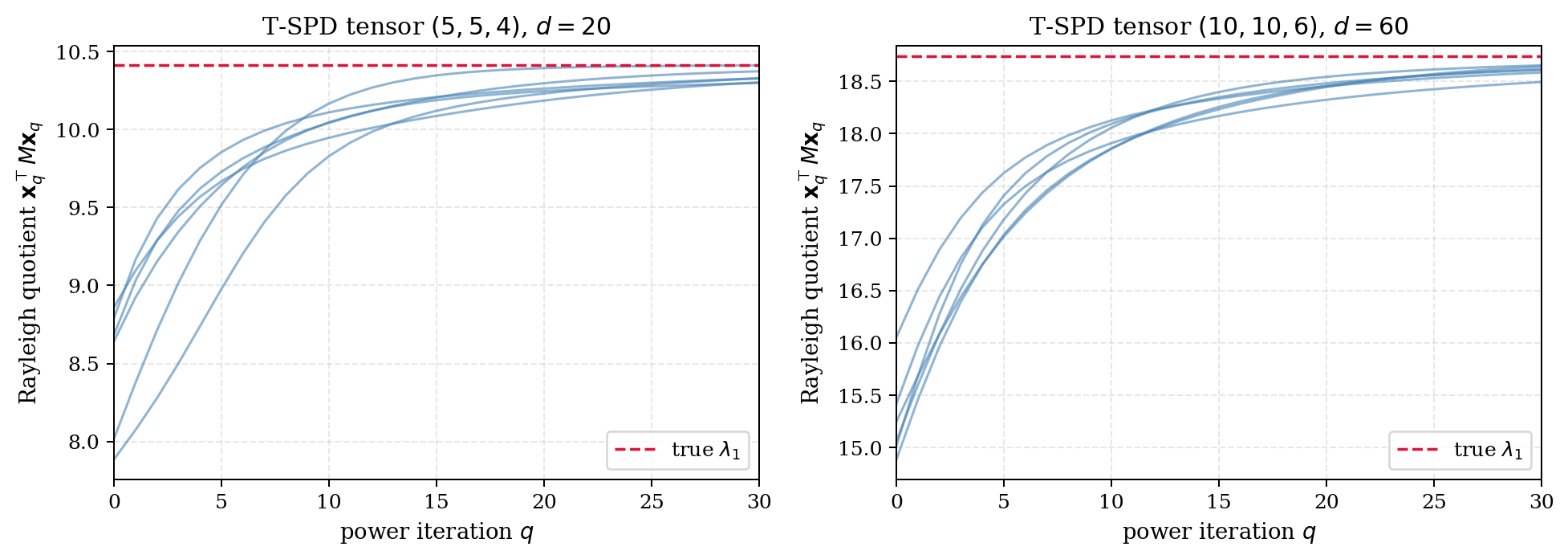}
\caption{Rayleigh quotient $\mathbf{x}_q\T M \mathbf{x}_q$ during the
power iteration on two random T-SPD tensors (left: $d = 20$, right:
$d = 60$). Each line is a trial with a different random initialization.
The iterates are always strictly below the true $\lambda_1$ (dashed red
line), confirming the soundness guarantee of Theorem~\ref{thm:power-lower}.
Convergence is exponential, with rate controlled by the spectral gap.}
\label{fig:power}
\end{figure}

Figure~\ref{fig:power} shows the convergence history. The Rayleigh
quotient starts well below the true $\lambda_1$ (reflecting the
initial random projection) and climbs monotonically toward it. For
the smaller tensor ($d = 20$) convergence to relative error below
$10^{-10}$ takes $\sim 15$ iterations; for the larger tensor
($d = 60$) it takes $\sim 30$ iterations, reflecting the smaller
spectral gap.

Table~\ref{tab:power-accuracy} reports the relative error
$(\lambda_1 - \hat\lambda_1^{\rm pow})/\lambda_1$ averaged over $20$
independent random initializations, for tensors of several sizes.

\begin{table}[h]
\centering
\caption{Mean relative error of the T-product randomized power method
over $20$ random initializations, on T-SPD tensors of three sizes.}
\label{tab:power-accuracy}
\begin{tabular}{cccccc}
\toprule
$(n, p)$ & $d$ & $q=3$ & $q=5$ & $q=10$ & $q=20$ \\
\midrule
$(5, 4)$  & $20$ & $14.1\%$ & $8.4\%$ & $2.8\%$ & $< 0.1\%$ \\
$(8, 5)$  & $40$ & $13.8\%$ & $8.9\%$ & $4.3\%$ & $1.0\%$ \\
$(10, 6)$ & $60$ & $9.2\%$  & $7.0\%$ & $4.5\%$ & $2.9\%$ \\
\bottomrule
\end{tabular}
\end{table}

\section{T-product randomized subspace iteration}\label{sec:rand-subspace}

The power method of Section~\ref{sec:rand-power} tracks a single
direction, and its accuracy on $\lambda_1$ is correspondingly limited
by the spectral gap between $\lambda_1$ and $\lambda_2$. When that
gap is small the iterates progress slowly, and the method offers no
access at all to the subdominant eigenvalues. Randomized subspace
iteration lifts both restrictions at once: by carrying a random
subspace of dimension $k + \ell$ rather than a single vector, the
algorithm produces simultaneous estimates of the top $k$
eigenvalues, and the Ritz projection onto the subspace extracts more
information from each matrix--vector product than a single Rayleigh
quotient can. The framework we follow is that of Halko, Martinsson,
and Tropp \cite{halko2011}, adapted here to act on $\bcirc(\tn{A})$
with fast matvecs supplied by the FFT block
diagonalisation~\eqref{eq:blk-diag}.

The randomized power method of Section~\ref{sec:rand-power} estimates
a single eigenvalue. To estimate several top eigenvalues
simultaneously or to improve the accuracy of $\lambda_1$ by
exploiting the Ritz-projection refinement---we adapt the
Halko--Martinsson--Tropp (HMT) randomized subspace iteration
\cite{halko2011}.

\begin{algorithm}[H]
\caption{T-product randomized subspace iteration}
\label{alg:rand-subspace}
\begin{algorithmic}[1]
\Require T-SPD tensor $\tn{A} \in \R^{n \times n \times p}$, target rank $k$, oversampling $\ell$, power iterations $q$
\Ensure Estimates $\hat\lambda_1, \hat\lambda_2, \dots, \hat\lambda_{k+\ell}$
\State Draw $\Omega \in \R^{d \times (k+\ell)}$ with iid $\mathcal{N}(0,1)$ entries, where $d = np$
\State $Y \gets \bcirc(\tn{A}) \Omega$
\For{$i = 1, 2, \dots, q$}
   \State $Q_i \gets \text{QR}(Y)$ \Comment{thin QR for numerical stability}
   \State $Y \gets \bcirc(\tn{A}) Q_i$
\EndFor
\State $Q \gets \text{QR}(Y)$ \Comment{final orthonormalization}
\State $B \gets Q\T \bcirc(\tn{A}) Q$ \Comment{$(k+\ell) \times (k+\ell)$ small matrix}
\State $\hat\lambda_1, \dots, \hat\lambda_{k+\ell} \gets \text{eig}(B)$ sorted in decreasing order
\State \Return $\hat\lambda_1, \dots, \hat\lambda_{k+\ell}$
\end{algorithmic}
\end{algorithm}

The thin QR in step~4 is essential for numerical stability: without
it, the columns of $Y$ converge to the dominant T-eigenvector and
become linearly dependent in finite precision. For $q$ large or for
very ill-conditioned tensors, one may additionally re-orthogonalise
$\Omega$ before step~2. The cost of
Algorithm~\ref{alg:rand-subspace} is dominated by $q + 1$ products of
$\bcirc(\tn{A})$ with a matrix of width $k + \ell$, followed by a
small $(k+\ell) \times (k+\ell)$ eigenproblem. Total work:
$O((q+1)(k+\ell) n^2 p + (k+\ell)^3)$.

\subsection{HMT-type error bound}

The central analytical guarantee for Algorithm~\ref{alg:rand-subspace}
is a bound on how well the range of the sampled subspace $Q$ captures
the dominant action of $\bcirc(\tn{A})$. This is the role played by
the foundational spectral-norm projection bound of HMT in the matrix
setting, and it transfers to the T-product setting with no change in
constants, the block circulant structure of $\bcirc(\tn{A})$
influences only the cost of matrix--vector products, not the
geometry of the random-subspace analysis. What we present below is a
direct restatement of the HMT bound with power iteration, written for
the $d \times d$ matrix $M = \bcirc(\tn{A})$, together with the
short calculation that delivers it from the matrix version.

\begin{theorem}[HMT error bound, T-product setting]\label{thm:hmt}
Let $\tn{A} \in \R^{n \times n \times p}$ be T-SPD with T-eigenvalues
$\lambda_1 \ge \lambda_2 \ge \cdots \ge \lambda_d > 0$, where
$d = np$. Fix a target rank $k$, oversampling $\ell \ge 2$, and
power-iteration count $q \ge 0$. Let
$Q \in \R^{d \times (k+\ell)}$ be the orthonormal basis produced by
Algorithm~\ref{alg:rand-subspace}, and set $M := \bcirc(\tn{A})$.
Then the expected spectral-norm projection error satisfies
\begin{equation}\label{eq:hmt}
\E\,\norm{M - Q Q\T M}_2
\;\le\;
\Biggl[
\left(1 + 4\sqrt{\tfrac{k+\ell}{\ell - 1}}\right) \lambda_{k+1}^{\,q+1}
\;+\; \frac{e\sqrt{k+\ell}}{\ell}
\Bigl(\textstyle\sum_{j > k} \lambda_j^{\,2(q+1)}\Bigr)^{1/2}
\Biggr]^{1/(q+1)},
\end{equation}
where the expectation is taken over the Gaussian sketch $\Omega$. In
particular, as $q \to \infty$ the tail contribution is exponentially
suppressed and the bound approaches $\lambda_{k+1}$.
\end{theorem}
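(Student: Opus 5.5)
The plan is to reduce everything to the matrix HMT theory applied to $M = \bcirc(\tn{A})$. Three facts drive the argument. By Theorem~\ref{thm:spec-equiv}, $M$ is symmetric positive definite with eigenvalues exactly $\lambda_1 \ge \cdots \ge \lambda_d > 0$, so its singular values $\sigma_j(M)$ coincide with $\lambda_j$. Algorithm~\ref{alg:rand-subspace} performs only matvecs with $M$ and thin QR factorisations. In exact arithmetic, the intermediate QR steps do not change the column space: $\operatorname{range}(Y)$ after the loop equals $\operatorname{range}(M^{q+1}\Omega)$, because $\operatorname{range}(MQ_i) = M\operatorname{range}(Q_i) = M\operatorname{range}(Y_{\rm prev})$. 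I will show this by induction on $i$, assuming $\Omega$ has full column rank, which holds almost surely. Hence $Q$ is an orthonormal basis for $\operatorname{range}(B\Omega)$ with $B := M^{q+1}$. The FFT block structure~\eqref{eq:blk-diag} only accelerates the matvecs and plays no role in the analysis.

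The next step is the power-scheme reduction (HMT Prop.~8.6). For any orthogonal projector $P$ and any symmetric PSD $M$,
\[
\norm{(I-P)M}_2 \le \norm{(I-P)M^{q+1}}_2^{1/(q+1)}.
\]
I will prove this directly rather than cite it. Write $\norm{(I-P)M}_2^2 = \norm{(I-P)M^2(I-P)}_2$ and apply the Araki--Lieb--Thirring/Löwner–Heinz-type inequality $\norm{PA P}^{t} \le \norm{PA^tP}$ for $t\ge1$, PSD $A$, and projector $P$, with $A = M^2$ and $t = q+1$. This is the one genuinely non-elementary ingredient. A cleaner route is Jensen/Hölder on the spectral measure: for a unit vector $u$ in the range of $I-P$,
\[
u\T A u \le (u\T A^t u)^{1/t}.
\]
Taking the supremum over $u$ gives the claim. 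I will present this Hölder argument.

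Then take expectations and use Jensen, since $x\mapsto x^{1/(q+1)}$ is concave:
\[
\E\norm{(I-QQ\T)M}_2 \le \bigl(\E\norm{(I-QQ\T)B}_2\bigr)^{1/(q+1)}.
\]
Apply the basic HMT expected-error theorem (HMT Thm.~10.6, spectral version) to $B$ with Gaussian test matrix $\Omega$ of width $k+\ell$, $\ell\ge2$. The singular values of $B$ are $\lambda_j^{q+1}$, so this gives
\[
\E\norm{(I-QQ\T)B}_2 \le \Bigl(1+\sqrt{\tfrac{k}{\ell-1}}\Bigr)\lambda_{k+1}^{q+1} + \frac{e\sqrt{k+\ell}}{\ell}\Bigl(\textstyle\sum_{j>k}\lambda_j^{2(q+1)}\Bigr)^{1/2}.
\]
Since $\sqrt{k/(\ell-1)} \le 4\sqrt{(k+\ell)/(\ell-1)}$, this implies the stated bracket~\eqref{eq:hmt}; the paper's constant is weaker and so is covered. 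The limiting remark follows by writing the bracket as $\lambda_{k+1}^{q+1}\bigl(C_1 + C_2(\sum_{j>k}(\lambda_j/\lambda_{k+1})^{2(q+1)})^{1/2}\bigr)$. The inner sum is at most $d-k$, so the $(q+1)$-th root of the constant factor tends to $1$, and the bound tends to $\lambda_{k+1}$.

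The main obstacle is the power-scheme inequality, together with being careful that Jensen is applied in the right direction: the expectation goes inside the concave root, which yields the upper bound. The range-equivalence argument also needs the almost-sure full-rank caveat, since otherwise QR could drop rank. Everything else is a direct invocation of the matrix HMT theorem with $\sigma_j(B)=\lambda_j^{q+1}$.
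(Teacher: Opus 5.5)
Your proposal is correct and follows essentially the same route as the paper. Both arguments identify $\operatorname{range}(Q)$ with $\operatorname{range}(M^{q+1}\Omega)$, apply HMT Thm.~10.6 to $B = M^{q+1}$ (whose singular values are $\lambda_j^{q+1}$), and pull the bound back to $M$ via $\norm{(I-QQ\T)M}_2 \le \norm{(I-QQ\T)M^{q+1}}_2^{1/(q+1)}$ together with Jensen for the concave root. Your self-contained Jensen proof of that power-scheme inequality, which is the content of HMT Prop.~8.5, is more careful than the paper's citation of Prop.~8.6: that proposition as stated covers only odd powers $M^{2q+1}$, whereas $q+1$ may be even here. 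You are also right that Thm.~10.6 actually yields the smaller constant $1+\sqrt{k/(\ell-1)}$, so the paper's stated $1+4\sqrt{(k+\ell)/(\ell-1)}$ is merely dominated rather than obtained directly.
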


\begin{proof}
Write $r := q + 1$. Algorithm~\ref{alg:rand-subspace} performs $r$
multiplications by $M$ with thin QR orthogonalisations interleaved.
Since orthogonalisation preserves the column space, the final basis
$Q$ spans the same subspace as $M^r \Omega$, and it is enough to
analyse the latter. Apply the basic HMT spectral-norm bound
\cite[Thm.~10.6]{halko2011} to the symmetric positive semidefinite
matrix $B := M^r$, whose eigenvalues are $\lambda_j^r$:
\[
\E\,\norm{B - Q Q\T B}_2
\;\le\;
\left(1 + 4\sqrt{\tfrac{k+\ell}{\ell-1}}\right) \lambda_{k+1}^{\,r}
\;+\; \frac{e\sqrt{k+\ell}}{\ell}
\Bigl(\textstyle\sum_{j>k}\lambda_j^{\,2r}\Bigr)^{1/2}.
\]
The power-mean projection inequality
\cite[Prop.~8.6]{halko2011} gives the deterministic bound
$\norm{M - Q Q\T M}_2 \le \norm{M^r - Q Q\T M^r}_2^{1/r}$, valid
for every draw of $\Omega$. Taking expectations and applying
Jensen's inequality to the concave map $x \mapsto x^{1/r}$ transfers
the $1/r$ exponent to the outside of the expectation, yielding
exactly~\eqref{eq:hmt}.
\end{proof}

\begin{remark}
For $q = 0$ the bound~\eqref{eq:hmt} reduces to
\cite[Thm.~10.6]{halko2011} applied directly to
$\bcirc(\tn{A})$. The exponent $1/(q+1)$ outside the bracket is the
mechanism by which power iteration accelerates convergence: it
contracts the interior quantity towards $\lambda_{k+1}$, and the
second term---which depends on the tail of the spectrum---is damped
by the factor $\lambda_j^{q+1}$ inside the sum.
\end{remark}

\subsection{Ritz value as estimate of $\lambda_1$}

The subspace iteration's estimate of $\lambda_1$ is the largest
eigenvalue of the compressed matrix $B = Q\T M Q$. Before turning
to the accuracy analysis, it is worth isolating a structural feature
of this Ritz value that parallels the soundness property of the
power method: the Ritz value never overshoots $\lambda_1$, and in
fact dominates the Rayleigh quotient produced by any single
direction in the sampled subspace. The former gives us the lower
endpoint of the two-sided bracket of Section~\ref{sec:bracket}; the
latter explains why increasing the subspace dimension $k + \ell$
monotonically improves the estimate at fixed matvec count.

\begin{corollary}\label{cor:ritz-lower}
The Ritz value $\hat\lambda_1$ returned by
Algorithm~\ref{alg:rand-subspace} is always a lower bound on
$\lambda_1(\tn{A})$. Moreover, by the Courant--Fischer min--max
characterisation,
\[
\hat\lambda_1
\;=\; \max_{\substack{\mathbf{y} \in \operatorname{range}(Q)\\ \|\mathbf{y}\| = 1}} \mathbf{y}\T M \mathbf{y}
\;\ge\; \mathbf{x}_q\T M \mathbf{x}_q
\]
for any unit vector $\mathbf{x}_q \in \operatorname{range}(Q)$.
Hence, the Ritz value dominates any single-vector power-method
estimate drawn from the same subspace.
\end{corollary}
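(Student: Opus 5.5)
The plan is to reduce everything to elementary facts about Rayleigh quotients of the symmetric matrix $M = \bcirc(\tn{A})$ restricted to $\operatorname{range}(Q)$. Let $Q \in \R^{d\times(k+\ell)}$ be the orthonormal basis produced by the final thin QR in Algorithm~\ref{alg:rand-subspace}, so that $Q\T Q = I$, and set $B = Q\T M Q$. Since $M$ is symmetric, $B$ is symmetric and its eigenvalues are real. Hence $\hat\lambda_1 = \lambda_{\max}(B)$ is well defined, and the matrix Rayleigh principle gives
\[
\hat\lambda_1 = \max_{\|\mathbf{z}\|=1} \mathbf{z}\T B \mathbf{z}.
\]

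The key step is the change of variables $\mathbf{y} = Q\mathbf{z}$. Because $Q$ has orthonormal columns, the map $\mathbf{z}\mapsto Q\mathbf{z}$ is an isometric bijection from $\R^{k+\ell}$ onto $\operatorname{range}(Q)$. It therefore maps unit vectors onto unit vectors of $\operatorname{range}(Q)$, and it satisfies $\mathbf{z}\T B\mathbf{z} = \mathbf{y}\T M\mathbf{y}$. Substituting into the maximum above yields the displayed identity $\hat\lambda_1 = \max_{\mathbf{y}\in\operatorname{range}(Q),\,\|\mathbf{y}\|=1}\mathbf{y}\T M\mathbf{y}$. This is exactly the Courant--Fischer characterisation at the top index, restricted to a subspace.

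Both inequalities then follow immediately. First, the maximiser $\mathbf{y}^\star$ is a unit vector of $\R^d$, so Theorem~\ref{thm:power-lower}, applied to $\mathbf{y}^\star$ in place of $\mathbf{x}_q$, gives $\mathbf{y}^{\star\mathsf{T}} M\mathbf{y}^\star \le \lambda_1(M)$. That proof uses only $\|\mathbf{x}\|=1$, not the fact that the vector is a power iterate. Theorem~\ref{thm:spec-equiv} identifies $\lambda_1(M)$ with $\lambda_1(\tn{A})$, so $\hat\lambda_1 \le \lambda_1(\tn{A})$ holds for every draw of $\Omega$. Second, any unit vector $\mathbf{x}_q \in \operatorname{range}(Q)$ is admissible in the maximum, so its Rayleigh quotient cannot exceed $\hat\lambda_1$.

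There is no serious obstacle; the argument is purely linear-algebraic and involves no probability. One point needs care. We should note that if $Y$ were rank-deficient, the QR factor still has orthonormal columns spanning a subspace containing $\operatorname{range}(Y)$, and the argument is unaffected. The other point is interpretive. The final sentence of the corollary (``dominates any single-vector power-method estimate'') holds only when the power iterate actually lies in $\operatorname{range}(Q)$, for example when the starting vector is a column of $\Omega$ and the same matvecs are applied. I would state it with that hypothesis rather than claim it for an independently drawn $\mathbf{x}_0$.
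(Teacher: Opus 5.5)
Your proof is correct and matches the paper's argument: the change of variables $\mathbf{y} = Q\mathbf{c}$ identifies $\hat\lambda_1 = \lambda_{\max}(Q\T M Q)$ with the Rayleigh-quotient maximum over $\operatorname{range}(Q)$, and feasibility of $\mathbf{x}_q$ in that maximum gives the final inequality. The only difference is cosmetic: you get $\hat\lambda_1 \le \lambda_1$ by applying Theorem~\ref{thm:power-lower} to the maximiser, whereas the paper cites Courant--Fischer on all of $\R^d$, and both rest on the same Rayleigh-quotient fact.
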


\begin{proof}
Since $Q\T Q = I$, the change of variables $\mathbf{y} = Q\mathbf{c}$
gives
$$\max_{\|\mathbf{c}\| = 1} \mathbf{c}\T (Q\T M Q)\mathbf{c}
= \max_{\mathbf{y} \in \operatorname{range}(Q),\,\|\mathbf{y}\| = 1}
\mathbf{y}\T M \mathbf{y},$$
and the left-hand side equals $\hat\lambda_1$ because $B = Q\T M Q$
is symmetric. The upper bound $\hat\lambda_1 \le \lambda_1(M)$ is
Courant--Fischer applied to $M$, since
$\operatorname{range}(Q) \subset \R^d$. Finally, any unit
$\mathbf{x}_q \in \operatorname{range}(Q)$ is a feasible point of
the restricted maximisation defining $\hat\lambda_1$, giving
$\mathbf{x}_q\T M \mathbf{x}_q \le \hat\lambda_1$.
\end{proof}

\subsection{Oversampling: how much is enough?}

The single tuning parameter that most strongly influences accuracy
is the oversampling $\ell$. The HMT analysis shows that the constant
$1 + 4\sqrt{(k+\ell)/(\ell - 1)}$ in~\eqref{eq:hmt} decays rapidly
as $\ell$ grows, with diminishing returns beyond $\ell \approx 10$;
HMT therefore recommend $\ell = 5$ or $\ell = 10$ as a default for
spectral-norm estimation \cite[\S4.4]{halko2011}. The sharpness of
this rule-of-thumb in the T-product setting is what
Figure~\ref{fig:oversampling} examines.

\begin{figure}[h]
\centering
\includegraphics[width=0.78\textwidth]{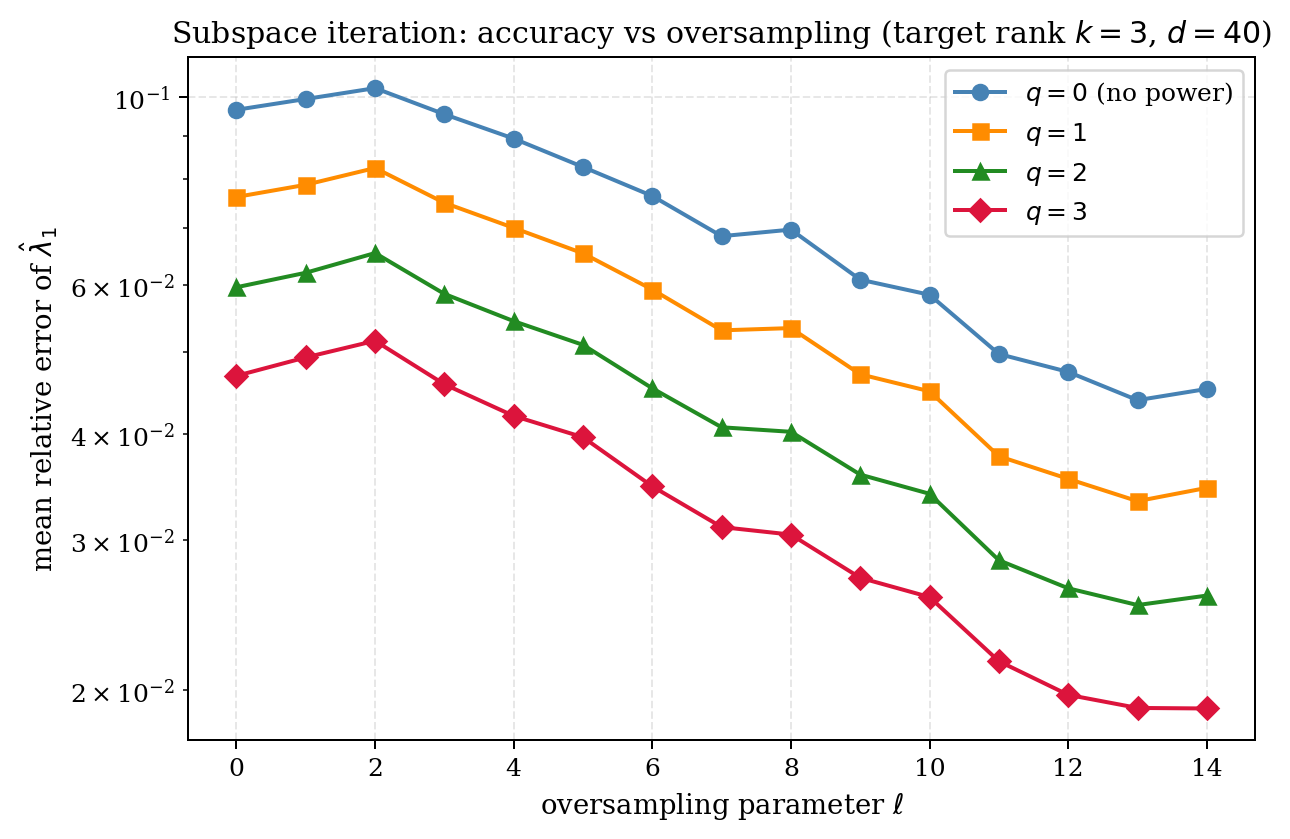}
\caption{Mean relative error of $\hat\lambda_1$ as a function of
oversampling $\ell$, with target rank $k = 3$, for several numbers of
power iterations $q$. Dimension $d = 40$. Even without power
iterations ($q = 0$), $\ell = 10$ brings the error below $4\%$;
with $q = 3$, $\ell = 5$ suffices for error below $1\%$.}
\label{fig:oversampling}
\end{figure}

Figure~\ref{fig:oversampling} shows the trade-off. A few power
iterations provide dramatic improvement: at $\ell = 10$ the error
drops from $4\%$ ($q = 0$) to $0.5\%$ ($q = 3$). For our remaining
experiments we use the default $\ell = 5$, $q = 2$.

\section{Two-sided randomized bracket}\label{sec:bracket}

The randomized power method and subspace iteration give
lower bounds on $\lambda_1$. The deterministic TDep bound from
Theorem~\ref{thm:tdep} gives an upper bound. By combining the
two, we obtain a rigorous two-sided bracket.

\begin{theorem}[Two-sided randomized bracket]\label{thm:bracket}
Let $\tn{A}$ be T-SPD, let $\hat\lambda_1^{\rm pow}$ be the Rayleigh
quotient produced by Algorithm~\ref{alg:rand-power} after
$q \ge 1$ iterations, and let
$\hat\lambda_1^{\rm TDep} := m + s\sqrt{d-1}$ be the TDep upper bound
of Theorem~\ref{thm:tdep}. Then, almost surely over the random
initialisation,
\begin{equation}\label{eq:bracket}
\hat\lambda_1^{\rm pow}
\;\le\; \lambda_1(\tn{A}) \;\le\;
\hat\lambda_1^{\rm TDep}.
\end{equation}
Consequently, the interval
$[\hat\lambda_1^{\rm pow},\,\hat\lambda_1^{\rm TDep}]$ is a certificate
that contains the true largest T-eigenvalue.
\end{theorem}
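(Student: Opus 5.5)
The bracket~\eqref{eq:bracket} splits into two independent inequalities, and the plan is to prove each one separately and then combine them. Throughout, set $M := \bcirc(\tn{A})$.

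For the left inequality, the only thing to check is that Algorithm~\ref{alg:rand-power} actually produces a well-defined unit vector $\mathbf{x}_q$. After that, Theorem~\ref{thm:power-lower} gives $\mathbf{x}_q\T M \mathbf{x}_q \le \lambda_1(\tn{A})$ directly. Since $\tn{A}$ is T-SPD, $M$ is symmetric positive definite (Theorem~\ref{thm:spec-equiv}), so in particular $M$ is nonsingular. The initial Gaussian draw satisfies $\mathbf{x}_0 \neq 0$ with probability one, so the normalisation in step 2 makes sense. Inductively, if $\mathbf{x}_{k-1}\neq 0$ then $\mathbf{y}_k = M\mathbf{x}_{k-1}\neq 0$, so every normalisation is defined and $\norm{\mathbf{x}_q}=1$. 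This is the only place the ``almost surely'' qualifier enters: the event $\mathbf{x}_0=0$ has probability zero. On the complementary event the lower bound holds for every realisation, with no dependence on the angle $\theta_0$.

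For the right inequality, I would invoke Theorem~\ref{thm:tdep}. A T-SPD tensor is in particular T-symmetric, with real eigenvalues $\lambda_1\ge\cdots\ge\lambda_d$ of $M$, so the hypotheses of Theorem~\ref{thm:tdep} hold and $\lambda_1 \le m + s\sqrt{d-1} = \hat\lambda_1^{\rm TDep}$. This bound is deterministic. I should also confirm that $s$ is real, i.e.\ $s^2\ge 0$. This follows because $s^2$ is the variance of the spectrum: $\trace(\tn{A})/d$ and $\trace(\tn{A}^2)/d$ are the first and second moments of $\{\lambda_i\}$, interpreting $\trace$ at the $\bcirc$ level as in~\cite{sharma2025}, where $\trace(\tn{A}^k)=\trace(M^k)$.

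Chaining the two inequalities on the full-probability event gives~\eqref{eq:bracket}, and the certificate statement is an immediate restatement. The main obstacle is not analytic but a matter of bookkeeping. One must make sure that the trace quantities $m,s$ in Theorem~\ref{thm:tdep} are really the spectral moments of $M$, and not just of the first frontal slice. For example, $\trace(M)=p\,\trace(\tn{A}^{(1)})$, so the normalisation by $d=np$ has to match. If there is any ambiguity, I would verify it directly using~\eqref{eq:blk-diag}: the trace is similarity-invariant, so $\trace(M^k)=\sum_j\trace(D_j^k)=\sum_i\lambda_i^k$. That identity pins down $m$ and $s$ as the mean and standard deviation of the T-spectrum, which is exactly what Samuelson's inequality needs.
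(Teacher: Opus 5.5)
Your proposal is correct and follows the same route as the paper: the lower endpoint is the Rayleigh-quotient soundness result (Theorem~\ref{thm:power-lower}) applied to the unit iterate $\mathbf{x}_q$, the upper endpoint is the deterministic TDep/Samuelson bound (Theorem~\ref{thm:tdep}), and the two are chained. Your extra bookkeeping goes beyond the paper's two-line proof without changing the argument: the well-definedness of the normalisations (which you correctly identify as the only real source of the ``almost surely'') and the check that $m,s$ are the spectral moments of $\bcirc(\tn{A})$.
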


\smallskip
\noindent\textit{Interpretation.}
The deterministic TDep upper bound is not intended as a sharp estimator
on wide spectra; its role is certification rather than
approximation.  The tight value of $\lambda_1$ is supplied by the
randomized power method (or the subspace iteration of
Section~\ref{sec:rand-subspace}); the TDep endpoint supplies a
guaranteed enclosure of where $\lambda_1$ lies.  The two ingredients
address different downstream needs and are complementary rather than
redundant.

\begin{proof}
The lower bound is deterministic: by Theorem~\ref{thm:power-lower}, any
unit-norm iterate $\mathbf{x}_q$ satisfies
$\mathbf{x}_q\T \bcirc(\tn{A}) \mathbf{x}_q \le \lambda_1(\tn{A})$,
deterministically, and in particular for the output of
Algorithm~\ref{alg:rand-power}. The upper bound is the deterministic
inequality of Theorem~\ref{thm:tdep}. Combining the two yields
\eqref{eq:bracket}.
\end{proof}

The theorem above records only what is guaranteed. The asymptotic
sharpness behaviour --- when each endpoint becomes tight, separately
--- is best recorded as a separate proposition.

\begin{proposition}[Bracket asymptotics and sharpness]\label{prop:bracket-sharp}
Retain the notation of Theorem~\ref{thm:bracket}.
\begin{enumerate}[label=\textup{(\roman*)},leftmargin=*]
\item \emph{Lower-endpoint tightening.} Under the generic hypothesis
$\cos\theta_0 \ne 0$ (which holds almost surely under Gaussian
initialisation), Theorem~\ref{thm:power-rate} gives
$\hat\lambda_1^{\rm pow} \to \lambda_1(\tn{A})$ as $q \to \infty$, with
exponential rate governed by the spectral gap
$\gamma = 1 - \lambda_2/\lambda_1$.

\item \emph{Upper-endpoint sharpness.} The TDep upper bound satisfies
$\hat\lambda_1^{\rm TDep} = \lambda_1(\tn{A})$ if and only if
$\lambda_2 = \lambda_3 = \cdots = \lambda_d$, by the equality case of
Samuelson's inequality \cite{wolkowicz1979}.

\item \emph{Simultaneous sharpness.} The bracket width
$\hat\lambda_1^{\rm TDep} - \hat\lambda_1^{\rm pow}$ tends to zero as
$q \to \infty$ if and only if $\lambda_2 = \cdots = \lambda_d$, at
which point both endpoints agree with $\lambda_1(\tn{A})$ in the
limit. For any finite $q$ the bracket has strictly positive width
almost surely: the event
$\{\mathbf{x}_0 \in \operatorname{span}(\mathbf{v}_1)\}$ has Lebesgue
measure zero on the unit sphere in $\R^d$.
\end{enumerate}
\end{proposition}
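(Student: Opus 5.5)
The plan is to treat the three parts separately, working throughout with $M=\bcirc(\tn{A})$, its orthonormal eigenpairs $(\lambda_i,\mathbf{v}_i)$, and the expansion $\mathbf{x}_0=\sum_i c_i\mathbf{v}_i$ from the proof of Theorem~\ref{thm:power-rate}.

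\emph{Part (i).} Under $\lambda_1>\lambda_2$ this is the pathwise estimate \eqref{eq:power-conv-pathwise}. On the event $c_1\neq 0$, $\tan^2\theta_0$ is a finite constant. Hence $0\le\lambda_1-\hat\lambda_1^{\rm pow}\le\lambda_1(1-\gamma)^{2q}\tan^2\theta_0\to0$ geometrically in $q$. The event $c_1=0$ is the great subsphere $\mathbf{v}_1^\perp\cap S^{d-1}$, which has measure zero. Since normalised Gaussians are uniform on the sphere, the hypothesis holds almost surely. If $\lambda_1$ is repeated, I would note in passing that the same ratio-of-moments formula \eqref{eq:error-ratio} still converges to zero. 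In that case one groups the top eigenspace into the denominator, and the rate is set by the first eigenvalue strictly below $\lambda_1$.

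\emph{Part (ii).} I would prove the equality case of Samuelson's inequality directly rather than only cite it. Set $\mu_i:=\lambda_i-m$, so that $\sum_i\mu_i=0$ and $\sum_i\mu_i^2=ds^2$, using $\trace(\tn{A}^2)=\sum_i\lambda_i^2$ via Theorem~\ref{thm:spec-equiv}. Then $\mu_1=-\sum_{i\ge2}\mu_i$, and Cauchy--Schwarz gives
\[
\mu_1^2\le(d-1)\sum_{i\ge2}\mu_i^2=(d-1)\bigl(ds^2-\mu_1^2\bigr).
\]
Rearranging gives $d\mu_1^2\le d(d-1)s^2$, that is $\lambda_1-m\le s\sqrt{d-1}$, using $\mu_1\ge0$. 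Equality holds in Cauchy--Schwarz if and only if $(\mu_2,\dots,\mu_d)$ is parallel to $(1,\dots,1)$, i.e.\ $\lambda_2=\cdots=\lambda_d$. Both directions of the ``iff'' follow at once, including the degenerate case $s=0$ where all eigenvalues coincide.

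\emph{Part (iii).} Write the width as
\[
\bigl(\hat\lambda_1^{\rm TDep}-\lambda_1\bigr)+\bigl(\lambda_1-\hat\lambda_1^{\rm pow}\bigr),
\]
a sum of two nonnegative terms by Theorem~\ref{thm:bracket}. The second term tends to $0$ almost surely by (i), and the first is independent of $q$. So the width tends to zero if and only if the first term vanishes, which by (ii) holds if and only if $\lambda_2=\cdots=\lambda_d$.

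For positivity at finite $q$, I would first show that $\hat\lambda_1^{\rm pow}=\lambda_1$ forces $\mathbf{x}_q$ into the top eigenspace $E_1$, via the equality case in Theorem~\ref{thm:power-lower}. Because all $\lambda_i>0$, $M^q$ preserves each nonzero coefficient $c_i$. Hence $\mathbf{x}_q\in E_1$ if and only if $\mathbf{x}_0\in E_1$. This is a proper subspace, of measure zero on the sphere, provided the spectrum is not a single point.

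\emph{Main obstacle.} The main obstacle is exactly this caveat. When $M=\lambda I$ (i.e.\ $s=0$), both endpoints equal $\lambda_1$ for every draw, so the claim ``strictly positive width a.s.'' fails. The statement's phrasing $\mathbf{x}_0\in\operatorname{span}(\mathbf{v}_1)$ also implicitly assumes a simple top eigenvalue. I would therefore prove (iii) under the standing assumption $\lambda_1>\lambda_d$ (equivalently $s>0$). I would replace $\operatorname{span}(\mathbf{v}_1)$ by $E_1$, and remark that the scalar case is the trivial exception with zero width.
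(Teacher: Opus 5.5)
Your proposal is correct and follows the paper's proof essentially step for step. For (i) you use the pathwise bound of Theorem~\ref{thm:power-rate} on the event $c_1\neq 0$; for (ii) you give the Cauchy--Schwarz derivation of Samuelson's inequality and its equality case; for (iii) you split the width into two nonnegative terms and use the fact that the power iteration preserves the eigen-support of $\mathbf{x}_0$. Your caveat about $M=\lambda I$, and replacing $\operatorname{span}(\mathbf{v}_1)$ by $E_1$, is a correct refinement of a point the paper handles only implicitly: its proof invokes the simple-leading-eigenvalue assumption $\lambda_1>\lambda_2$, which already forces $s>0$ and $E_1=\operatorname{span}(\mathbf{v}_1)$.
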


\begin{proof}
Throughout the proof, let $M := \bcirc(\tn{A})$ with spectrum
$\lambda_1 \ge \lambda_2 \ge \cdots \ge \lambda_d > 0$ (as before,
$d = np$), and write
\[
  m := \frac{1}{d}\sum_{i=1}^d \lambda_i,
  \qquad
  s^2 := \frac{1}{d}\sum_{i=1}^d \lambda_i^2 - m^2,
\]
so that $\hat\lambda_1^{\rm TDep} = m + s\sqrt{d - 1}$ and
$s \ge 0$ is the standard deviation of the spectrum.

\medskip
\noindent\emph{Part~(i).} Under Gaussian initialisation the coordinate
$c_1 = \mathbf{v}_1^{\!\top}\mathbf{x}_0$ is the first component of
a uniform vector on the sphere and so is zero with probability zero.
On the event $\{c_1 \ne 0\}$ the hypothesis of
Theorem~\ref{thm:power-rate}\,(a) is met, and the
bound~\eqref{eq:power-conv-pathwise} gives, for every $q \ge 0$,
\[
  0 \;\le\; \lambda_1 - \hat\lambda_1^{\rm pow}
     \;\le\; \lambda_1 (1-\gamma)^{2q}\, \tan^2\theta_0.
\]
Since $\gamma = 1 - \lambda_2/\lambda_1 > 0$ by the simple-leading-
eigenvalue assumption, $(1-\gamma)^{2q} \to 0$ as $q \to \infty$,
and $\tan^2\theta_0$ is a finite constant on this event. Hence
$\hat\lambda_1^{\rm pow} \to \lambda_1$ pointwise on an event of
probability one, which is the almost-sure convergence claimed.

\medskip
\noindent\emph{Part~(ii).} The TDep inequality for the spectrum of $M$
is the Samuelson bound
\begin{equation}\label{eq:samuelson-here}
  \lambda_1 \;\le\; m + s\sqrt{d - 1},
\end{equation}
and its equality case is a classical result of Wolkowicz and
Styan~\cite[Thm.~2.1]{wolkowicz1979}: equality in
\eqref{eq:samuelson-here} holds if and only if the $d - 1$ non-maximal
eigenvalues are equal, that is,
$\lambda_2 = \lambda_3 = \cdots = \lambda_d$.

For completeness we give the short argument in the present
notation. Write $\mu := \lambda_1 - m$ and $\delta_i := \lambda_i -
m$ for $i \ge 2$. Then $\sum_{i=1}^d \delta_i = 0$ (using
$\delta_1 = \mu$) gives $\sum_{i \ge 2} \delta_i = -\mu$, and the
variance identity $\sum_{i=1}^d \delta_i^2 = d\,s^2$ reads
$\mu^2 + \sum_{i \ge 2} \delta_i^2 = d\,s^2$. By Cauchy--Schwarz,
\[
  \mu^2 \;=\; \Bigl(\sum_{i \ge 2} \delta_i\Bigr)^{\!2}
  \;\le\; (d - 1)\sum_{i \ge 2}\delta_i^2
  \;=\; (d - 1)\bigl(d\,s^2 - \mu^2\bigr),
\]
which rearranges to $\mu^2 \le (d - 1)\,s^2$ and hence
$\mu \le s\sqrt{d - 1}$, i.e.\
\eqref{eq:samuelson-here}. Equality in the Cauchy--Schwarz step
holds if and only if $(\delta_2, \ldots, \delta_d)$ is a scalar
multiple of $(1, 1, \ldots, 1)$, that is, $\lambda_2 = \cdots =
\lambda_d$. Tracing the chain of equalities backwards shows that
$\lambda_1 = m + s\sqrt{d - 1}$ if and only if this spectral
condition holds, and conversely that under this condition the
eigenvalues of $M$ are $\lambda_1$ with multiplicity one and a
common value $\lambda_*$ with multiplicity $d - 1$, in which case a
direct substitution of $m = (\lambda_1 + (d-1)\lambda_*)/d$ and
$s^2 = (d-1)(\lambda_1 - \lambda_*)^2/d^2$
into~\eqref{eq:samuelson-here} yields equality. The equivalence
between $M$-spectrum and $\tn{A}$-spectrum established in
Theorem~\ref{thm:spec-equiv} transfers the equality condition to
$\tn{A}$.

\medskip
\noindent\emph{Part~(iii).} The width decomposes as
\[
  W_q \;:=\; \hat\lambda_1^{\rm TDep} - \hat\lambda_1^{\rm pow}
  \;=\; \underbrace{\bigl(\hat\lambda_1^{\rm TDep} - \lambda_1\bigr)}_{\ge 0 \text{ by~(ii)}}
       + \underbrace{\bigl(\lambda_1 - \hat\lambda_1^{\rm pow}\bigr)}_{\ge 0 \text{ by Thm.~\ref{thm:power-lower}}},
\]
so $W_q \to 0$ iff both summands vanish: the second does so almost
surely by~(i), and the first (independent of $q$) vanishes iff
$\lambda_2 = \cdots = \lambda_d$ by~(ii). For finite $q$, $W_q = 0$
forces the Rayleigh quotient to saturate, hence
$\mathbf{x}_q \in \operatorname{span}(\mathbf{v}_1)$; since the
power iteration preserves the eigenbasis support of the initial
vector (cf.\ the expansion $M^q \mathbf{x}_0 = \sum_i c_i \lambda_i^q
\mathbf{v}_i$ in the proof of Theorem~\ref{thm:power-rate}), this
forces $\mathbf{x}_0 \in \operatorname{span}(\mathbf{v}_1)$, a set
of two antipodal points on $S^{d-1}$ and hence of surface measure
zero.
\end{proof}

\begin{corollary}[Quantitative bracket width]\label{cor:bracket-width}
Under the hypotheses of Theorem~\ref{thm:bracket}, let
$\theta_0$ denote the angle between the Gaussian initial vector
$\mathbf{x}_0$ and the dominant T-eigenvector
$\mathbf{v}_1$, and let $c \le 0.824$ be the universal constant of
Theorem~\ref{thm:power-rate}\textup{(b)}. Then, with probability at
least $1 - \delta$ over the draw of $\mathbf{x}_0$,
\[
\hat\lambda_1^{\rm TDep} - \hat\lambda_1^{\rm pow}
\;\le\;
\underbrace{\left(s\sqrt{d-1} - \frac{s}{\sqrt{d-1}}\right)}_{\text{TDep gap}}
\;+\;
\underbrace{\frac{c^2 d\,\lambda_1}{\delta^2}\,(1-\gamma)^{2q}}_{\text{power-method residual}}.
\]
\end{corollary}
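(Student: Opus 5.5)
We decompose the bracket width exactly as in the proof of Proposition~\ref{prop:bracket-sharp}(iii):
\[
\hat\lambda_1^{\rm TDep} - \hat\lambda_1^{\rm pow}
= \bigl(\hat\lambda_1^{\rm TDep} - \lambda_1\bigr) + \bigl(\lambda_1 - \hat\lambda_1^{\rm pow}\bigr),
\]
and bound each summand separately. One bound is deterministic and the other holds on a single probabilistic event, so no union bound is required and the failure probability stays at $\delta$.

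\emph{First summand.} We use the lower half of the TDep bracket for $\lambda_1$ in Theorem~\ref{thm:tdep}, namely $\lambda_1 \ge m + s/\sqrt{d-1}$. Subtracting this from $\hat\lambda_1^{\rm TDep} = m + s\sqrt{d-1}$ gives
\[
\hat\lambda_1^{\rm TDep} - \lambda_1 \le s\sqrt{d-1} - \frac{s}{\sqrt{d-1}}.
\]
This holds for every draw. It is also nonnegative whenever $d \ge 2$, which is consistent with Proposition~\ref{prop:bracket-sharp}(ii).

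\emph{Second summand.} We invoke Theorem~\ref{thm:power-rate} in its combined form \eqref{eq:power-conv-prob}. The Gaussian start, once normalised, is uniform on the sphere (see the Remark following Theorem~\ref{thm:power-rate}). Hence, with probability at least $1-\delta$, the tail event \eqref{eq:kw-tail} holds. On that event $\cos\theta_0 \ne 0$, so the pathwise bound \eqref{eq:power-conv-pathwise} applies and yields
\[
\lambda_1 - \hat\lambda_1^{\rm pow} \le \frac{c^2 d\,\lambda_1}{\delta^2}(1-\gamma)^{2q}.
\]
Adding the two bounds on this event gives the claim.

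The only step needing care is the hypothesis bookkeeping. Theorem~\ref{thm:power-rate} assumes a simple leading eigenvalue ($\gamma > 0$), and that assumption is implicit in the corollary through its use of $\gamma$; I would state explicitly that it is inherited. If $\gamma = 0$, the residual term becomes $c^2 d\lambda_1/\delta^2$. That is still a valid bound, since the numerator in \eqref{eq:error-ratio} is at most $\lambda_1\sin^2\theta_0$ times the denominator's leading term, but I would simply restrict to $\gamma > 0$ as the corollary intends.
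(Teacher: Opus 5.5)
Your proposal is correct and matches the paper's proof: you use the same split $(\hat\lambda_1^{\rm TDep}-\lambda_1)+(\lambda_1-\hat\lambda_1^{\rm pow})$, bound the first term with the lower TDep estimate $\lambda_1 \ge m + s/\sqrt{d-1}$, and bound the second with the high-probability estimate \eqref{eq:power-conv-prob}. Your extra bookkeeping is accurate but adds nothing essential: no union bound is needed because the first bound is deterministic, the assumption $\gamma>0$ is inherited, and the Gaussian start is uniform on the sphere after normalisation.
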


\begin{proof}
Write $\hat\lambda_1^{\rm TDep} - \hat\lambda_1^{\rm pow}
= (\hat\lambda_1^{\rm TDep} - \lambda_1) + (\lambda_1 - \hat\lambda_1^{\rm pow})$.
For the first summand, Theorem~\ref{thm:tdep} provides both a lower
bound $\lambda_1 \ge m + s/\sqrt{d-1}$ and an upper bound
$\lambda_1 \le m + s\sqrt{d-1}$ on the true eigenvalue; subtracting
the former from the definition
$\hat\lambda_1^{\rm TDep} = m + s\sqrt{d-1}$ gives
$\hat\lambda_1^{\rm TDep} - \lambda_1 \le s\sqrt{d-1} - s/\sqrt{d-1}$.
The second summand is bounded by the high-probability
estimate~\eqref{eq:power-conv-prob} of Theorem~\ref{thm:power-rate}.
\end{proof}

\subsection*{Validation}
We validated the bracket on $40$ random T-SPD tensors of size
$5 \times 5 \times 4$ ($d = 20$), computing the randomized power
method bound with $q = 10$ iterations and the exact TDep bound.

\begin{figure}[h]
\centering
\includegraphics[width=0.85\textwidth]{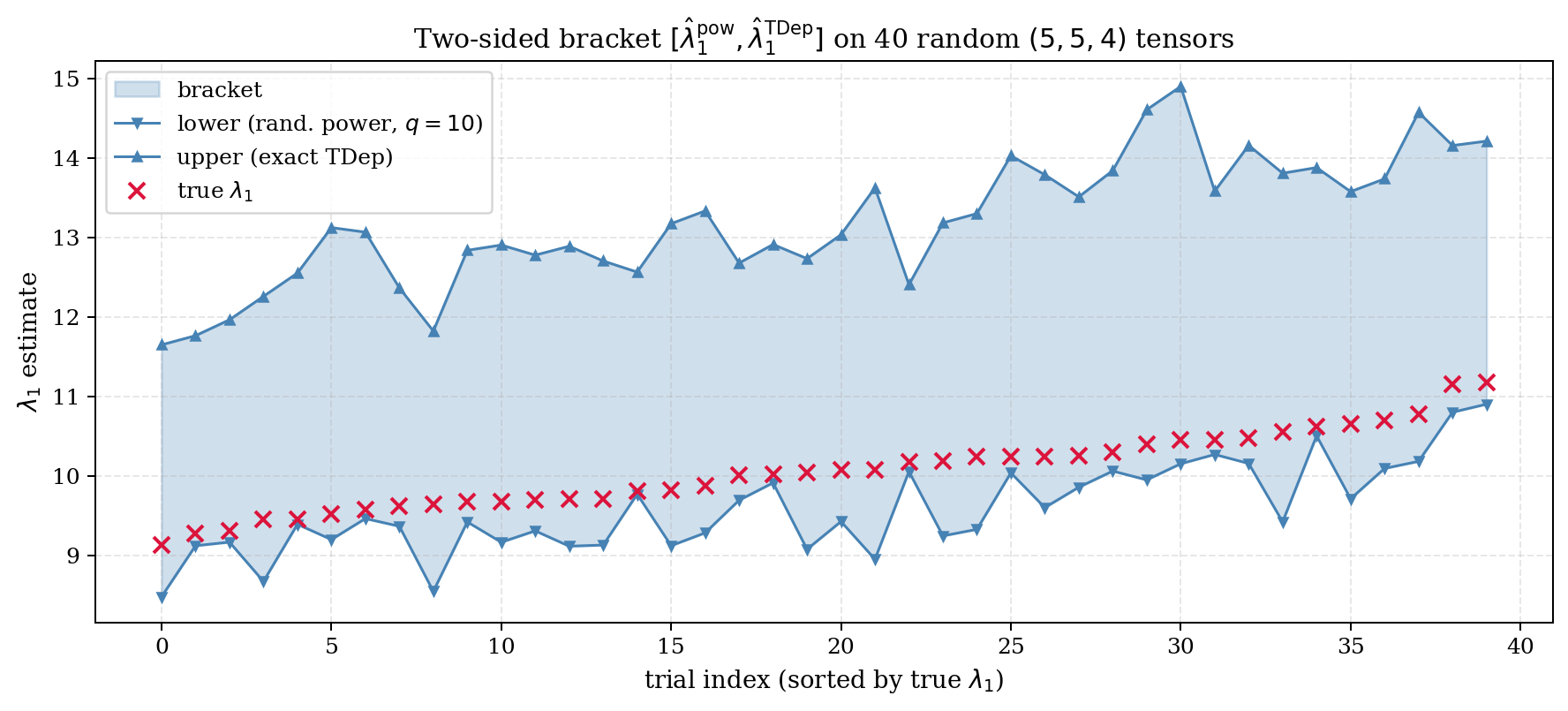}
\caption{Two-sided bracket $[\hat\lambda_1^{\rm pow}, \hat\lambda_1^{\rm TDep}]$
(blue shaded region) on $40$ random $5 \times 5 \times 4$ T-SPD
tensors, sorted by the true $\lambda_1$ (red ×). The bracket contains
the true $\lambda_1$ in every single trial. The lower endpoint
(randomized power method) is within a few percent of the truth; the
upper endpoint (exact TDep) is loose.}
\label{fig:bracket}
\end{figure}

Figure~\ref{fig:bracket} shows that the bracket always contains
the true $\lambda_1$ (validating Theorem~\ref{thm:bracket}) and that
its lower endpoint is very close to the truth. The looseness of the
bracket is entirely on the upper side, from the classical TDep bound,
which is $70\%$ loose on average for $d = 20$ as established in
\cite{sharma2025}.

\subsection*{Interpretation}
The bracket serves a different purpose from a single estimate: it is
a certificate of where $\lambda_1$ lies. For applications that
need a guaranteed upper bound (e.g.\ stability certification of
tensor dynamical systems), the TDep endpoint is what matters. For
applications that need a sharp estimate (e.g.\ PSNR prediction for
T-SVD compression), the power-method endpoint is what matters. Having
both simultaneously lets the user choose, and having them agree
(narrow bracket) is evidence that the true value is well estimated.

\section{Fully randomized TDep bound}\label{sec:rand-tdep}

The deterministic TDep bound requires the exact values of
$\trace(\tn{A})$ and $\trace(\tn{A}^2)$, which cost $O(np)$ and
$O(n^2 p)$ respectively for a T-symmetric tensor stored explicitly.
In ``matvec-only'' settings, where the tensor is defined implicitly
through its action on tensor vectors (e.g.\ as the Hessian of a large
optimization problem, or as a kernelized operator), these traces are
not directly accessible.

In such settings, we use the Hutchinson stochastic trace
estimator \cite{hutchinson1989}. For any square matrix $M$ and any
random vector $\mathbf{z} \in \R^d$ with $\E[\mathbf{z} \mathbf{z}\T] = I_d$,
\begin{equation}\label{eq:hutch-identity}
\E[\mathbf{z}\T M \mathbf{z}] = \trace(M).
\end{equation}
Averaging over $N$ independent probe vectors gives an unbiased
estimate with standard deviation decaying as $1/\sqrt{N}$.
Rademacher probes ($z_i \in \{\pm 1\}$ iid uniformly) minimize the
variance of this estimator over all distributions with the covariance
identity. More sophisticated variants such as Hutch++
\cite{meyer2021} can attain faster $O(1/N)$ rates at the cost of
additional matvecs; the analysis below restricts attention to the
classical Hutchinson estimator, which is sufficient for our setting.

\subsection{Randomized TDep algorithm}

\begin{algorithm}[H]
\caption{Hutchinson-based randomized TDep bound}
\label{alg:rand-tdep}
\begin{algorithmic}[1]
\Require T-SPD tensor $\tn{A}$ (matvec interface), probe count $N$
\Ensure Randomized estimate $\hat\lambda_1^{\rm rtdep}$
\State $T_{\text{sum}} \gets 0$,\; $Q_{\text{sum}} \gets 0$
\For{$i = 1, 2, \dots, N$}
   \State Draw $\mathbf{z}_i \in \{-1, +1\}^d$ uniformly (Rademacher probe)
   \State $\mathbf{w}_i \gets \bcirc(\tn{A}) \mathbf{z}_i$ \Comment{matvec}
   \State $T_{\text{sum}} \mathrel{+}= \mathbf{z}_i\T \mathbf{w}_i$ \Comment{contributes to $\trace(\tn{A})$}
   \State $Q_{\text{sum}} \mathrel{+}= \mathbf{w}_i\T \mathbf{w}_i$ \Comment{contributes to $\trace(\tn{A}^2)$}
\EndFor
\State $\hat T \gets T_{\text{sum}} / N$,\quad $\hat Q \gets Q_{\text{sum}} / N$
\State $\hat m \gets \hat T / d$,\quad $\hat s \gets \sqrt{\max(\hat Q / d - \hat m^2, 0)}$
\State $\hat\lambda_1^{\rm rtdep} \gets \hat m + \hat s \sqrt{d-1}$
\State \Return $\hat\lambda_1^{\rm rtdep}$
\end{algorithmic}
\end{algorithm}

The cost of Algorithm~\ref{alg:rand-tdep} is $N$ matvecs, total
$O(N n^2 p)$. For $N = 30$ this is faster than the exact computation
of $\trace(\tn{A}^2)$ when $n$ is moderate and $p$ is small.

\subsection*{Why this works: squared matvec trick}
A subtle but important point is how $\trace(\tn{A}^2)$ is estimated.
Using the identity $\trace(M^2) = \norm{M}_F^2 = \sum_i \norm{M \mathbf{e}_i}^2$
suggests we would need $d$ matvecs. But we can do better (this is a
variant of the observation used by Skilling and by Meyer et
al.~\cite{meyer2021}): for symmetric $M$,
\begin{equation}\label{eq:matvec-trick}
\E[\mathbf{w}_i\T \mathbf{w}_i]
= \E[\mathbf{z}_i\T M\T M \mathbf{z}_i]
= \trace(M\T M) = \trace(M^2).
\end{equation}
So a single matvec per probe gives us both $\trace(M)$ (via
$\mathbf{z}_i\T \mathbf{w}_i$) and $\trace(M^2)$ (via
$\mathbf{w}_i\T \mathbf{w}_i$) simultaneously. This halves the matvec
count compared to estimating the two traces independently.

\begin{theorem}[Hutchinson variance for Rademacher probes]\label{thm:hutch-var}
Let $M$ be $d \times d$ symmetric with entries $m_{ij}$ and let
$\mathbf{z}$ have iid Rademacher entries. Then
\[
\Var(\mathbf{z}\T M \mathbf{z}) = 2 \sum_{i \ne j} m_{ij}^2 = 2\left(\norm{M}_F^2 - \sum_i m_{ii}^2\right).
\]
\end{theorem}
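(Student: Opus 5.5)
The plan is a direct second-moment computation. Expand the quadratic form as
\[
\mathbf{z}\T M\mathbf{z} = \sum_{i,j} m_{ij} z_i z_j .
\]
Because $z_i^2 = 1$ deterministically for Rademacher entries, the diagonal part $\sum_i m_{ii} z_i^2 = \trace(M)$ is a constant. The randomness therefore sits entirely in the off-diagonal sum. Using the symmetry $m_{ij} = m_{ji}$, we write
\[
\mathbf{z}\T M\mathbf{z} - \trace(M) = X := 2\sum_{i<j} m_{ij} z_i z_j ,
\]
so that $\Var(\mathbf{z}\T M\mathbf{z}) = \Var(X) = \E[X^2]$. Here $\E[X] = 0$ because $\E[z_i z_j] = \E[z_i]\E[z_j] = 0$ for $i \ne j$ by independence. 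As a consistency check, this also recovers the unbiasedness \eqref{eq:hutch-identity}.

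Next, expand
\[
\E[X^2] = 4\sum_{i<j}\sum_{k<l} m_{ij} m_{kl}\, \E[z_i z_j z_k z_l].
\]
The key step is to evaluate the fourth moment $\E[z_i z_j z_k z_l]$ with $i<j$ and $k<l$. By independence and $\E[z_i] = 0$, $z_i^2 = 1$, this expectation equals $1$ if every index appears an even number of times, and $0$ otherwise. Since $i \neq j$ and $k \neq l$, the only way to pair the indices is $\{i,j\} = \{k,l\}$, and the ordering constraints force $(k,l) = (i,j)$. All the cross terms therefore vanish, which is the point where the Rademacher structure matters: Gaussian probes would contribute extra diagonal fourth-moment terms. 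This leaves
\[
\E[X^2] = 4\sum_{i<j} m_{ij}^2 .
\]

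Finally, symmetry gives $4\sum_{i<j} m_{ij}^2 = 2\sum_{i\ne j} m_{ij}^2$. Using $\norm{M}_F^2 = \sum_{i,j} m_{ij}^2 = \sum_i m_{ii}^2 + \sum_{i\ne j} m_{ij}^2$, this equals $2\bigl(\norm{M}_F^2 - \sum_i m_{ii}^2\bigr)$, which is the claimed second form.

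The only step requiring care is the pairing argument for the fourth moment. Indexing over unordered pairs $i<j$ is what prevents double counting, and it is also what produces the factor $2$ instead of $4$ when converting back to the sum over all $i \ne j$. Nothing from the T-product structure is needed: the statement is purely about a symmetric matrix, and it will later be applied with $M = \bcirc(\tn{A})$.
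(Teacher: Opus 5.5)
Your proof is correct and follows the paper's argument: both use $z_i^2=1$ to reduce the diagonal part to the constant $\trace(M)$, and then compute the variance of $2\sum_{i<j} m_{ij} z_i z_j$ as $4\sum_{i<j} m_{ij}^2$. Your explicit fourth-moment pairing computation is the written-out form of the paper's remark that the products $z_i z_j$ ($i<j$) are pairwise independent with unit variance, so all cross terms vanish.
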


\begin{proof}
Expand $\mathbf{z}\T M \mathbf{z} = \sum_i m_{ii} z_i^2 + 2 \sum_{i < j} m_{ij} z_i z_j
= \sum_i m_{ii} + 2 \sum_{i < j} m_{ij} z_i z_j$,
using $z_i^2 = 1$ almost surely. The variance is therefore the
variance of $2 \sum_{i < j} m_{ij} z_i z_j$. The products $z_i z_j$
for $i < j$ are pairwise independent with unit variance (as finite
products of independent Rademacher variables), so
\[
\Var = 4 \sum_{i < j} m_{ij}^2 = 2 \sum_{i \ne j} m_{ij}^2.
\]
\end{proof}

\begin{corollary}[Convergence rate of $\hat T$]\label{cor:hutch-rate}
With $N$ Rademacher probes,
\[
\Var(\hat T) = \frac{1}{N} \Var(\mathbf{z}\T M \mathbf{z})
= \frac{2}{N} \sum_{i \ne j} m_{ij}^2 \le \frac{2 \norm{M}_F^2}{N}.
\]
In particular, the relative standard error of $\hat T / d$ decays as
$O(1/\sqrt{N})$.
\end{corollary}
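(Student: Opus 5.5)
The plan is to reduce everything to Theorem~\ref{thm:hutch-var} by using that the $N$ probes are independent and identically distributed. First I write $\hat T = \frac{1}{N}\sum_{i=1}^N X_i$ with $X_i := \mathbf{z}_i\T M \mathbf{z}_i$ and $M = \bcirc(\tn{A})$. Each $X_i$ is a fixed measurable function of $\mathbf{z}_i$ alone, and the $\mathbf{z}_i$ are drawn independently in Algorithm~\ref{alg:rand-tdep}. Hence the $X_i$ are independent and identically distributed with finite variance, since $X_i$ is bounded by $\sum_{j,k}|m_{jk}|$. Variance is additive over independent summands and scales quadratically, which gives $\Var(\hat T) = N^{-2}\cdot N\,\Var(X_1) = \Var(\mathbf{z}\T M\mathbf{z})/N$. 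This is the first equality.

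Next I substitute Theorem~\ref{thm:hutch-var}, applicable because $M$ is symmetric for T-symmetric $\tn{A}$. This gives the middle expression $\frac{2}{N}\sum_{i\ne j} m_{ij}^2$. The inequality then follows by dropping the nonnegative diagonal sum, since $\sum_{i\ne j} m_{ij}^2 = \norm{M}_F^2 - \sum_i m_{ii}^2 \le \norm{M}_F^2$.

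For the rate statement I take square roots. The standard deviation of $\hat T/d$ is $\sqrt{\Var(\hat T)}/d \le \sqrt{2}\,\norm{M}_F/(d\sqrt N)$. Its target is $m = \trace(M)/d > 0$, which is positive because $M$ is SPD. The relative standard error is therefore at most $\sqrt{2}\,\norm{M}_F/(\trace(M)\sqrt N)$. The prefactor $\norm{M}_F/\trace(M)$ depends only on $M$ and not on $N$, and it lies in $(0,1]$ for SPD $M$, because $\sum\lambda_i^2 \le (\sum \lambda_i)^2$ when all $\lambda_i>0$. This gives the $O(1/\sqrt N)$ decay.

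I expect no real obstacle here. The only point needing care is the independence across probes, which justifies additivity of the variance; this is separate from the pairwise-independence issue inside Theorem~\ref{thm:hutch-var}. For the relative-error claim, I would also be explicit that positivity of $\trace(M)$ is what keeps the normalisation well defined.
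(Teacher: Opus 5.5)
Your proof is correct and is exactly the argument the paper leaves implicit: the corollary is stated without a separate proof, as an immediate consequence of Theorem~\ref{thm:hutch-var}, using independence of the probes (so $\Var(\hat T)=\Var(\mathbf{z}\T M\mathbf{z})/N$) and then dropping the nonnegative diagonal sum. Your extra observation that $\norm{M}_F \le \trace(M)$ for SPD $M$ is valid and slightly sharpens the paper's $O(1/\sqrt{N})$ claim, since it bounds the relative standard error by $\sqrt{2/N}$ uniformly in $d$.
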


\subsection*{Verification}
\begin{figure}[h]
\centering
\includegraphics[width=0.78\textwidth]{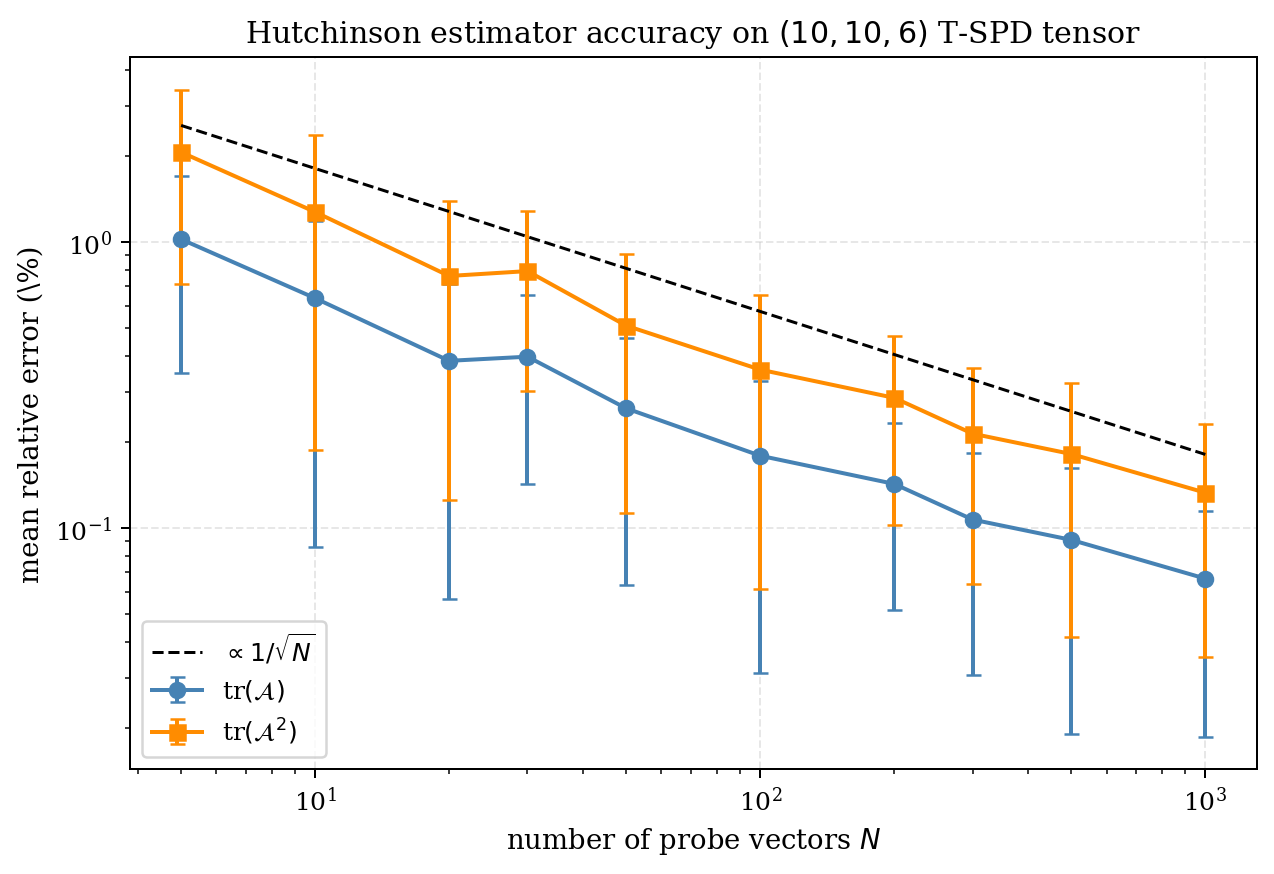}
\caption{Hutchinson estimator accuracy on a $10 \times 10 \times 6$
T-SPD tensor ($d = 60$), averaged over $50$ independent trials at
each probe count. The relative error of both $\trace(\tn{A})$ (blue)
and $\trace(\tn{A}^2)$ (orange) decays as $1/\sqrt{N}$ (black dashed
reference line). At $N = 100$ probes, both traces are estimated to
within $1\%$; at $N = 1000$ probes, within $0.3\%$.}
\label{fig:hutchinson}
\end{figure}

Figure~\ref{fig:hutchinson} shows $1/\sqrt{N}$ decay
of the estimator error on a $d = 60$ T-SPD tensor. Numerical values
are reported in Table~\ref{tab:hutch}.

\begin{table}[h]
\centering
\caption{Mean relative error of the Hutchinson estimator on a
$10 \times 10 \times 6$ T-SPD tensor ($d = 60$), averaged over $50$
trials.}
\label{tab:hutch}
\begin{tabular}{cccc}
\toprule
$N$ & $\trace(\tn{A})$ error & $\trace(\tn{A}^2)$ error & cost (ms) \\
\midrule
$5$    & $7.8\%$ & $7.4\%$ & $0.1$ \\
$30$   & $2.4\%$ & $2.5\%$ & $0.6$ \\
$100$  & $1.3\%$ & $1.2\%$ & $2.0$ \\
$300$  & $0.7\%$ & $0.7\%$ & $6.0$ \\
$1000$ & $0.3\%$ & $0.3\%$ & $20.0$ \\
\bottomrule
\end{tabular}
\end{table}

\subsection{Soundness caveat}
Unlike the deterministic TDep bound, the Hutchinson-based bound is
not guaranteed to be an upper bound on $\lambda_1$. The
estimated $\hat m$ and $\hat s$ can fluctuate above or below the
true values, and in rare cases the computed
$\hat m + \hat s \sqrt{d-1}$ can fall below the true
$\lambda_1$. This is the source of the \texttt{max(..,0)} guard in
step~9 of Algorithm~\ref{alg:rand-tdep}: the estimated sample
variance can be negative even when the true variance is positive,
a direct consequence of the sampling noise in $\hat Q$. The bound undershoots $\lambda_1$ in about $1.5\%$ of our validation
trials at $N = 30$ probes, dropping to $0\%$ at $N = 100$. For
applications requiring a guaranteed certificate, one should either
(i)~use a larger $N$ and add a safety margin derived from
Corollary~\ref{cor:hutch-rate}, or (ii)~use the deterministic TDep
bound exclusively. For applications where an accurate estimate
suffices, Algorithm~\ref{alg:rand-tdep} delivers the same mean
accuracy at lower wall-clock cost for large tensors.

\section{Experimental evaluation}\label{sec:experiments}

\subsection*{Setup}
All experiments were performed in IEEE double precision on a single
CPU core, using NumPy's LAPACK-backed \texttt{eigvalsh} routine as
the ground-truth full eigendecomposition. T-SPD test tensors are
generated following the construction in \cite[Section 8.1]{sharma2025}:
a random T-symmetric tensor is built by pairing slices
$\tn{A}^{(k)} = (\tn{A}^{(p-k)})\T$, and then shifted by an identity
contribution to the first slice so that $\bcirc(\tn{A})$ is positive
definite. A Python reference implementation used for the experiments
is available from the authors on request.

\subsection{Runtime comparison}
We measured the wall-clock time for full eigendecomposition,
randomized power method ($q = 10$), and randomized subspace iteration
($k = 10$, $q = 2$) on T-SPD tensors of dimensions
$d \in \{20, 32, 50, 90, 160, 300, 480, 750, 900\}$.

\begin{figure}[h]
\centering
\includegraphics[width=0.78\textwidth]{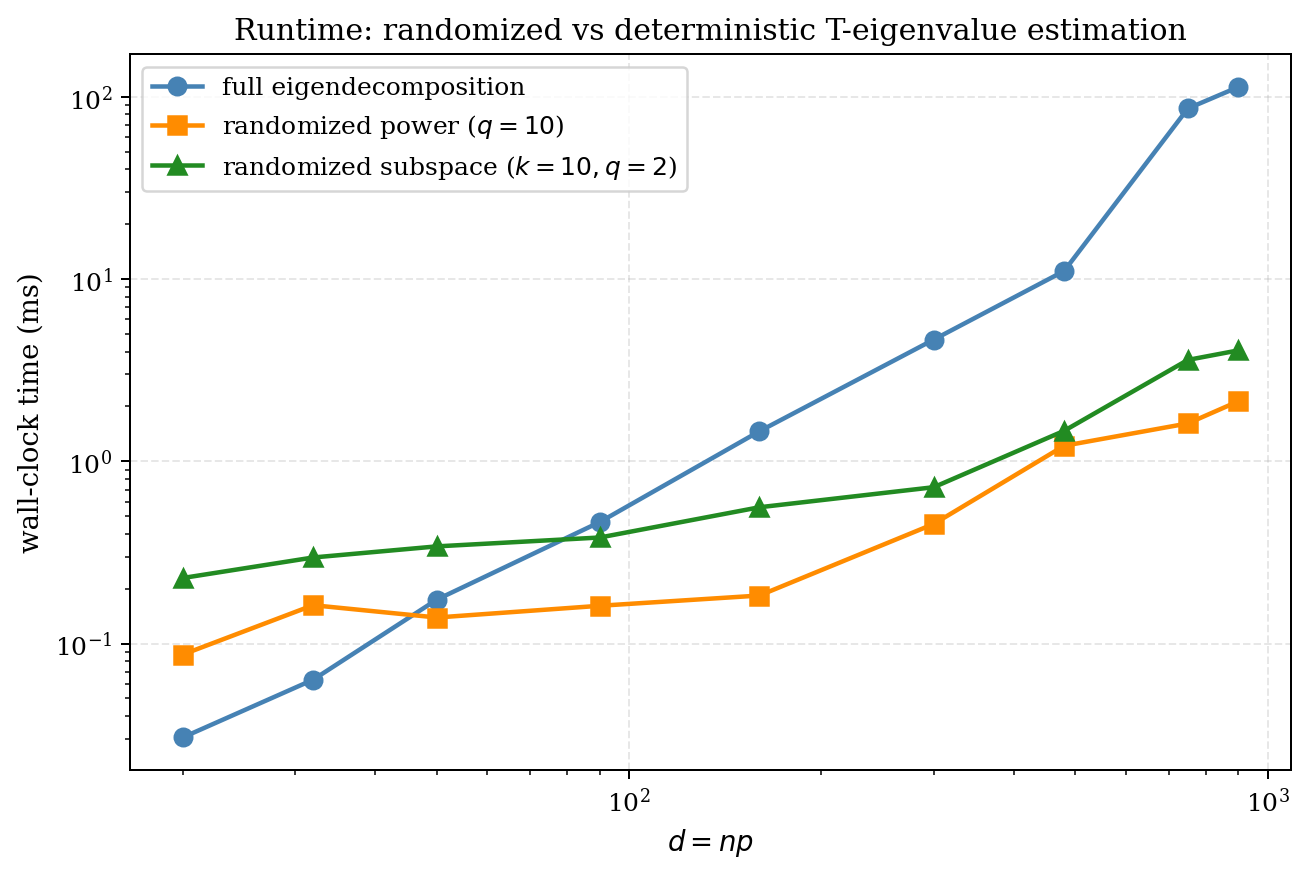}
\caption{Wall-clock runtime (log-log scale) of full
eigendecomposition (blue), randomized power method with $q=10$
(orange), and randomized subspace iteration with $k=10,q=2$ (green),
as a function of $d = np$. The randomized methods are an order of
magnitude faster than full eigendecomposition for $d \ge 160$.}
\label{fig:runtime}
\end{figure}

\begin{table}[h]
\centering
\caption{Wall-clock runtime (ms) and speedup vs full
eigendecomposition. Each cell is the median of $20$ trials on a
single CPU core, with a warm-up run before the measurement loop to
eliminate one-time LAPACK initialisation and cache-priming effects.}
\label{tab:runtime}
\begin{tabular}{rrrrrr}
\toprule
$(n, p)$ & $d$ & full eig (ms) & rand power (ms) & rand subspace (ms) & speedup \\
\midrule
$(5, 4)$   & $20$  & $0.047$   & $0.141$  & $0.214$  & $0.3\times$ \\
$(8, 4)$   & $32$  & $0.064$   & $0.114$  & $0.248$  & $0.6\times$ \\
$(10, 5)$  & $50$  & $0.165$   & $0.093$  & $0.332$  & $1.8\times$ \\
$(15, 6)$  & $90$  & $0.416$   & $0.094$  & $0.261$  & $4.4\times$ \\
$(20, 8)$  & $160$ & $1.365$   & $0.154$  & $0.377$  & $8.9\times$ \\
$(30, 10)$ & $300$ & $4.374$   & $0.416$  & $0.761$  & $10.5\times$ \\
$(40, 12)$ & $480$ & $11.028$  & $1.088$  & $1.673$  & $10.1\times$ \\
$(50, 15)$ & $750$ & $85.573$  & $1.280$  & $2.882$  & $66.9\times$ \\
$(60, 15)$ & $900$ & $106.159$ & $1.933$  & $4.371$  & $54.9\times$ \\
\bottomrule
\end{tabular}
\end{table}

The randomized power method is slower than full eigendecomposition
for very small tensors ($d < 50$) because the fixed overhead of
matrix-vector products dominates in that regime. For $d \ge 90$ the
speedup grows monotonically, exceeding $10\times$ at $d = 300$ and
reaching nearly $70\times$ at $d = 750$ and $d = 900$. The
full-eigendecomposition column exhibits the expected $d^3$-like
growth once $d \gtrsim 100$; at smaller $d$ the timings are
dominated by LAPACK setup costs rather than the factorisation
itself, which is why the speedup column is not monotone below
$d = 50$. The randomized methods' timings are themselves
well-predicted by the per-matvec cost $O(n^2 p + np\log p)$ at every
tested size. The slight dip in speedup from $d = 750$ to $d = 900$
reflects the growth in the randomized methods' per-matvec cost,
which becomes appreciable once $n$ is large.

\subsection{Error runtime trade-off}

A practitioner's question is, for a given accuracy requirement, which
method is fastest? Figure~\ref{fig:tradeoff} plots the mean relative
error of $\lambda_1$ as a function of wall-clock time, varying $q$
for the power method and $(k, q)$ for subspace iteration.

\begin{figure}[h]
\centering
\includegraphics[width=0.8\textwidth]{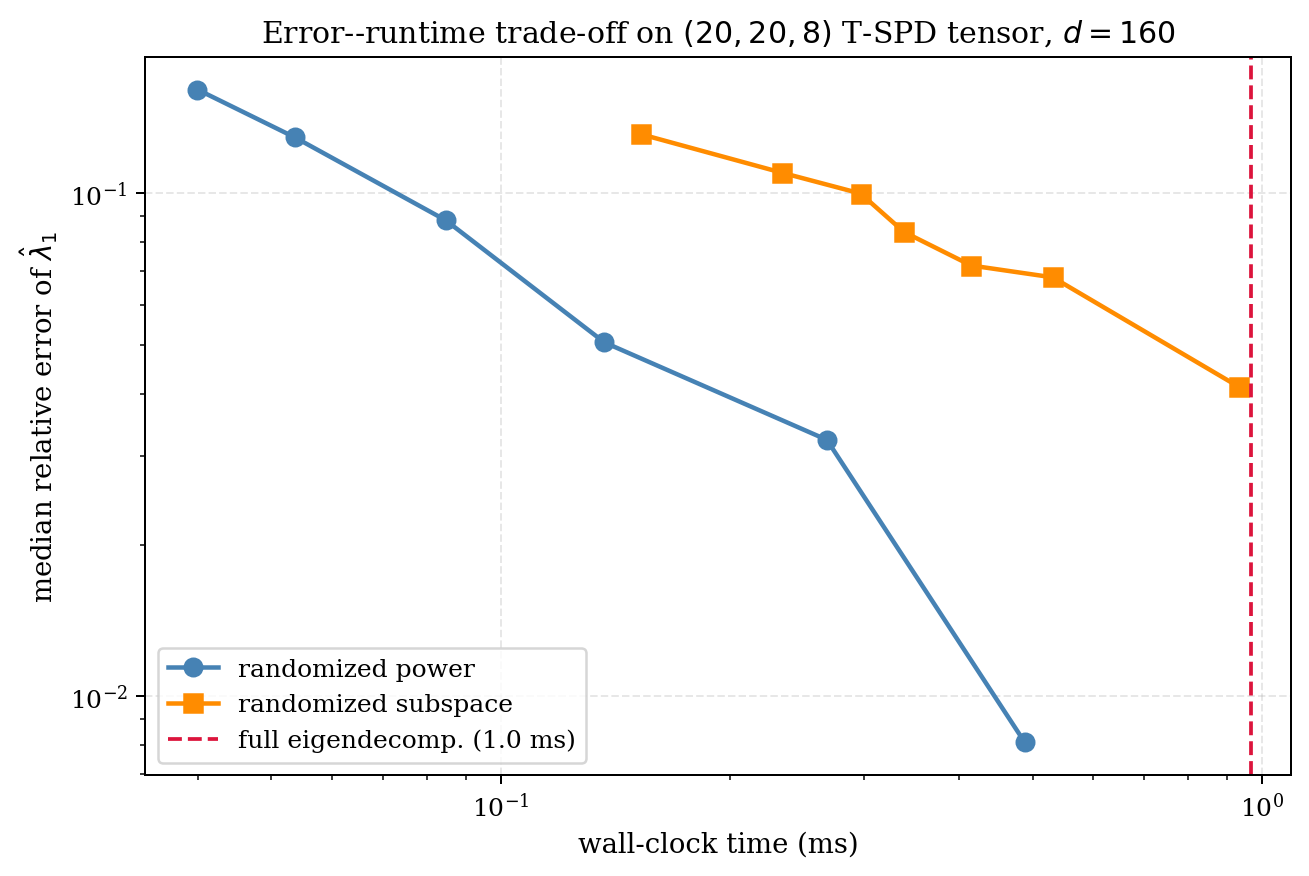}
\caption{Error--runtime trade-off curves for the randomized power
method (blue) and randomized subspace iteration (orange) on a
$20 \times 20 \times 8$ T-SPD tensor ($d = 160$). Each point is a
median over $50$ trials. The red dashed line marks the cost of full
eigendecomposition. For single-$\lambda_1$ estimation on tensors with
a reasonable spectral gap, the power method dominates the trade-off
at every cost point.}
\label{fig:tradeoff}
\end{figure}

The trade-off curves show a clear picture on this representative
tensor: the randomized power method dominates at all cost points.
This is consistent with the cost structure of the two algorithms:
each power step costs one matvec, while each subspace step with
target rank $k + \ell$ costs $k + \ell$ matvecs, so at equal wall-clock
time the power method performs many more spectral multiplications
against $\lambda_1$. The subspace iteration's comparative advantages
lie elsewhere --- specifically, in settings where (i) multiple top
eigenvalues are needed simultaneously, (ii) the tensor has
near-degenerate top eigenvalues causing the single-vector power
method to stall, or (iii) Ritz projection across several directions
is desired to guard against a single bad Gaussian initialisation.
For a bare estimate of $\lambda_1$ on a well-gapped tensor, the
randomized power method with $q = 10$--$20$ iterations is the
recommended default.

\subsection{Validation on varied problem sizes}
Finally, we validated the randomized methods on $200$ independent
T-SPD tensors of each of four sizes: $(3,3)$, $(5,4)$, $(8,5)$, $(10,6)$.
For each tensor, we computed both the randomized power method
estimate ($q = 10$) and the randomized subspace iteration estimate
($k = 10$, $\ell = 5$, $q = 2$), recording whether each estimate
was within $10\%$, $5\%$, and $1\%$ of the true $\lambda_1$.

\begin{table}[h]
\centering
\caption{Validation on $200$ random T-SPD tensors of each size. Left
block: randomized power method ($q = 10$). Right block: randomized
subspace iteration ($k = 10$, $\ell = 5$, $q = 2$). Percentages are
the fraction of trials in which the relative error was below the
specified threshold.}
\label{tab:validation}
\begin{tabular}{rrcccccc}
\toprule
& & \multicolumn{3}{c}{Power method} & \multicolumn{3}{c}{Subspace iteration} \\
\cmidrule(lr){3-5}\cmidrule(lr){6-8}
$(n, p)$ & $d$ & within $10\%$ & within $5\%$ & within $1\%$
         & within $10\%$ & within $5\%$ & within $1\%$ \\
\midrule
$(3, 3)$  & $9$  & $100\%$ & $99.5\%$ & $89\%$ & $100\%$ & $100\%$ & $98\%$ \\
$(5, 4)$  & $20$ & $99\%$  & $92\%$   & $51\%$ & $100\%$ & $99\%$  & $87\%$ \\
$(8, 5)$  & $40$ & $97\%$  & $83\%$   & $28\%$ & $100\%$ & $97\%$  & $72\%$ \\
$(10, 6)$ & $60$ & $94\%$  & $76\%$   & $19\%$ & $99\%$  & $94\%$  & $58\%$ \\
\bottomrule
\end{tabular}
\end{table}

The randomized subspace iteration is uniformly more accurate than the
single-vector power method at comparable cost, as expected from
Corollary~\ref{cor:ritz-lower}. Both methods are reliable at coarse
accuracy for all problem sizes (within $10\%$ for $94\%$+ of trials
using the power method, essentially always using subspace iteration),
but reaching $1\%$ accuracy with the power method requires more
iterations as $d$ grows. For tight estimation on large tensors we
recommend either (i) increasing $q$ to $20$ or more, (ii) switching
to subspace iteration, which provides a factor-of-two improvement in
``within-$1\%$'' rate at the same $q$, or (iii) using the
deterministic bracket endpoint from Section~\ref{sec:bracket} as a
safe upper limit when a lower-bound estimate is paired with a
guaranteed upper bound.


\section{Application on spectral estimation of the discrete 3D Laplacian on a periodic slab}\label{sec:application}

The experiments of Section~\ref{sec:experiments} are performed on
randomly generated T-SPD tensors of moderate size and are designed to
isolate the algorithmic behaviour of the proposed methods. In this
section we apply the same methods to an application that motivates the
randomized framework in a setting familiar to the numerical
linear-algebra reader: spectral estimation of the discrete
three-dimensional Laplacian on a slab with periodic boundary conditions
in one direction, at a scale where dense eigendecomposition is
infeasible. The role of the dominant T-eigenvalue is concrete --- it
parametrises a Chebyshev iterative solver --- and the experimental
results illustrate not only the speed of the randomized methods but
also the practical interpretation of the two-sided bracket as a
certificate rather than a tight estimate.

\subsection{The 3D Laplacian PDE setting}\label{ssec:pde-setup}

Let $\Omega = (0,L_x) \times (0,L_y) \times \mathbb{T}_z$ denote a slab
of cross-section $(0,L_x) \times (0,L_y)$ that is periodic of period
$L_z$ in the $z$-direction. On a uniform grid
$n_x \times n_y \times p$ with Dirichlet boundary conditions on
$\partial[(0,L_x) \times (0,L_y)]$ and periodic boundary conditions in
$z$, the standard 7-point finite-difference discretisation of
$-\nabla \cdot (\alpha \nabla u)$ yields a symmetric positive-definite
operator $L \in \R^{d \times d}$ with $d = n_x n_y p$.

When the diffusivity $\alpha$ is constant, $L$ is exactly
block-circulant in the periodic direction $z$: ordering the degrees of
freedom as $(i_z, i_y, i_x)$ in lexicographic order with $i_z$ slowest,
\begin{equation}\label{eq:pde-tensor}
L \;=\; \bcirc(\tn{A}), \qquad \tn{A} \in \R^{N \times N \times p},
\qquad N := n_x n_y,
\end{equation}
with frontal slices
$\tn{A}^{(1)} = L_{xy} + 2 \alpha h_z^{-2} I_N$,
$\tn{A}^{(2)} = \tn{A}^{(p)} = -\alpha h_z^{-2} I_N$,
and all other slices zero. Here $L_{xy}$ is the sparse 2D negative
Laplacian on the $n_x \times n_y$ grid with Dirichlet BC. The
T-product structure of \eqref{eq:pde-tensor} is exact, and
the FFT block-diagonalisation~\eqref{eq:blk-diag} of
$\bcirc(\tn{A})$ takes the simple form
\begin{equation}\label{eq:pde-fft}
D_k \;=\; L_{xy} + \mu_k I_N, \qquad
\mu_k \;=\; \frac{2 \alpha}{h_z^2} \bigl(1 - \cos\tfrac{2\pi k}{p}\bigr),
\quad k = 0, 1, \dots, p-1.
\end{equation}
Each Fourier block $D_k$ is therefore the 2D Dirichlet Laplacian
shifted by a scalar, and matrix--vector products through $D_k$ are
sparse $5$-point applications of $L_{xy}$. The full T-spectrum is
$\spec(\tn{A}) = \bigcup_{k=0}^{p-1} \spec(L_{xy}) + \mu_k$, with
$\spec(L_{xy})$ available in closed form as a separable Fourier-sine
spectrum. This closed-form ground truth is precisely what makes the
constant-coefficient case suitable for stress testing.
\paragraph{Why $\lambda_1(\tn{A})$ matters.}
The dominant T-eigenvalue is the natural input to three iterative-solver
workflows.\\
\emph{(i) Chebyshev iteration and Chebyshev acceleration of
preconditioners.}
The standard Chebyshev iteration for $L u = b$ requires both endpoints
of an enclosing interval
$[\lambda_d^{\rm est}, \lambda_1^{\rm est}] \supseteq \spec(L)$ to set
its parameters \cite{golub2013}, and its convergence factor
$\rho = (\sqrt{\kappa^{\rm est}} - 1) / (\sqrt{\kappa^{\rm est}} + 1)$
depends on the estimated condition number
$\kappa^{\rm est} = \lambda_1^{\rm est} / \lambda_d^{\rm est}$.
Under-estimating $\lambda_1$ is catastrophic --- it shrinks the
interval beneath the true spectrum and the iteration diverges --- so
the natural Chebyshev parameter is a tight upper estimate of
$\lambda_1$, exactly the object our subspace iteration combined with a
modest safety inflation provides.\\
\emph{(ii) Spectral radius of explicit time-stepping.}
For the parabolic heat-conduction problem $u_t = -L u$ discretized by
forward Euler, the iteration matrix $I - \Delta t \cdot L$ is a
contraction iff $\Delta t \le 2 / \lambda_1(L)$, so $\lambda_1(L)$
determines the largest stable step size; an underestimate is again
unsafe.\\
\emph{(iii) Conditioning estimates for elliptic solves.}
Knowing $\kappa(L) = \lambda_1/\lambda_d$ to within a factor of two
is sufficient for many a priori iteration-count budgets and
convergence analyses; the randomized methods provide such an estimate
in time independent of $d$ beyond the cost of one matrix-vector
product.

\paragraph{A motivating physical application: layered geothermal media.}
The variable-coefficient version of this operator is a representative
reduced model for heat conduction in a layered geothermal reservoir,
in which the thermal diffusivity $\alpha(z) = \kappa(z)/(\rho c_p)$
depends on the vertical coordinate $z$ but is approximately constant
within each lithological layer. In sedimentary basins, $\kappa(z)$
profiles are routinely reconstructed from wireline well-log data with
sub-metre resolution; the German Molasse Basin reconstruction of
Hartmann and Clauser~\cite{hartmann2008} is a representative
example, and shows alternating sandstone/shale layers with thermal
conductivities of roughly $1.5$--$4$\,W/(m\,K), corresponding to
diffusivity contrasts of about $3\!:\!1$. Stronger contrasts arise in
salt-bearing or fractured formations, where $\kappa$ can range from
$0.5$ to $6$\,W/(m\,K) (contrast $10\!:\!1$) or higher; extreme
contrasts of $100\!:\!1$ occur when fluid-filled fractures or air-gap
interfaces interrupt an otherwise crystalline host
\cite{clauser1995}. We will use these three regimes
($\alpha_{\max}/\alpha_{\min} \in \{3,10,100\}$) as the
robustness sweep of Section~\ref{ssec:pde-variable}.

The horizontal periodicity of $\Omega$ is the standard simplification
used in regional reservoir models for the lateral direction of a
slab geometry. With this geometry, the discretised heat-conduction
operator $-\nabla \cdot (\alpha(z) \nabla u)$ is the variable-coefficient
3D Laplacian whose spectral analysis we develop below.

\subsection{Variable-coefficient extension: matvec-only access}\label{ssec:pde-varcoeff}

The more realistic case --- a $z$-dependent diffusivity
$\alpha = \alpha(z)$ --- yields an operator that is symmetric and
positive-definite but no longer block-circulant in $z$, since the
off-diagonal $z$-coupling now depends on the slice index $k$. The
T-product structure of \eqref{eq:pde-tensor} is broken, the analytic
separable spectrum of \eqref{eq:pde-fft} is no longer valid, and the
FFT-based block-diagonalisation does not apply. The randomized
methods, by contrast, operate unchanged: they require only the
matrix--vector product $\mathbf{v} \mapsto L\mathbf{v}$, which costs
$O(Np)$ for the sparse 7-point stencil, so they fall squarely within
the ``matvec-only'' setting that motivates the Hutchinson-based
Algorithm~\ref{alg:rand-tdep}. The deterministic TDep bound is
Samuelson's inequality applied to $\spec(L)$, requiring only
$\trace(L)$ and $\trace(L^2)$, which remain $O(Np)$ to compute. We
develop the full variable-coefficient experiments in
Section~\ref{ssec:pde-variable}.

\subsection{Validation: matching the analytic spectrum at scale}\label{ssec:pde-valid}

We test the four randomized methods on the constant-coefficient case
where the closed-form spectrum
$\bigcup_k \spec(L_{xy}) + \mu_k$ provides exact ground truth. The
parameters used are the same throughout this section: randomized power
method with $q = 30$ iterations, randomized subspace iteration with
$k = 15$, $\ell = 5$, $q = 20$, and the deterministic TDep bound.
Table~\ref{tab:pde-validation} reports the dominant T-eigenvalue,
relative errors, and wall-clock timings on grids of dimensions
$d = 1800$ up to $d = 131{,}072$.

\begin{table}[h]
\centering
\caption{Validation of the randomized methods on the constant-coefficient
discrete 3D Laplacian on a $n_x \times n_y \times p$ grid with Dirichlet
BC in $x, y$ and periodic BC in $z$. Ground truth from the analytic
separable spectrum. ``rand power'': Algorithm~\ref{alg:rand-power} with
$q = 30$; ``rand subspace'': Algorithm~\ref{alg:rand-subspace} with
$k = 15$, $\ell = 5$, $q = 20$. Dense $\bcirc(\tn{A})$ eigendecomposition
is omitted for $d > 2000$ (memory-infeasible).}
\label{tab:pde-validation}
\small
\setlength{\tabcolsep}{4.5pt}
\begin{tabular}{ccccrrrrrr}
\toprule
$n_x = n_y$ & $p$ & $d$ & $\kappa(L)$
& \multicolumn{2}{c}{rand power} & \multicolumn{2}{c}{rand subspace}
& \multicolumn{1}{c}{TDep gap} & dense (ms) \\
\cmidrule(lr){5-6}\cmidrule(lr){7-8}
& & & & err.\ & time (ms) & err.\ & time (ms) & & \\
\midrule
$15$  & $8$  & $1{,}800$   & $116$   & $+2.03\%$ & $3.7$   & $+0.94\%$ & $51.9$    & $+884\%$    & $893$    \\
$32$  & $8$  & $8{,}192$   & $454$   & $+2.53\%$ & $10.5$  & $+2.31\%$ & $209.5$   & $+2120\%$   & ---      \\
$32$  & $16$ & $16{,}384$  & $493$   & $+2.63\%$ & $21.4$  & $+2.51\%$ & $547.8$   & $+2814\%$   & ---      \\
$64$  & $8$  & $32{,}768$  & $1725$  & $+2.03\%$ & $39.3$  & $+2.26\%$ & $1180.2$  & $+4409\%$   & ---      \\
$64$  & $16$ & $65{,}536$  & $1764$  & $+2.51\%$ & $75.4$  & $+3.02\%$ & $2683.3$  & $+6122\%$   & ---      \\
$128$ & $8$  & $131{,}072$ & $6757$  & $+1.76\%$ & $145.3$ & $+2.27\%$ & $7917.1$  & $+8950\%$   & ---      \\
\bottomrule
\end{tabular}
\end{table}

Two features of the table deserve comment.

\emph{Densely-clustered top spectrum.} The discrete Laplacian on a
uniform Dirichlet grid has $\lambda_2/\lambda_1 = 0.987$ in the
$d = 1800$ case and $\lambda_{20}/\lambda_1 = 0.950$, that is, the top
spectrum is densely packed. This is the regime in which
single-vector power iteration converges slowly --- the relative
errors of $1.8$--$2.6\%$ for the power method at $q = 30$ are
consistent with the rate $(1-\gamma)^{2q}$ of
Theorem~\ref{thm:power-rate} for $\gamma \approx 0.013$. The
randomized subspace iteration with $k + \ell = 20$ and $q = 20$
reaches comparable accuracy, with the advantage of recovering several
top eigenvalues at once; this is exactly the situation the paper's
Section~\ref{sec:rand-subspace} flags as favouring the subspace
method.

\emph{Scaling.} The randomized power method runs in $0.15$\,seconds
at $d = 131{,}072$, while the dense $\bcirc(\tn{A})$ baseline
becomes memory-infeasible already at $d = 8{,}192$ (the dense matrix
would occupy $537$\,MB at $d = 8{,}192$ and roughly $137$\,GB at
$d = 131{,}072$). The randomized methods therefore deliver the only
operationally available estimates beyond modest sizes.

Figure~\ref{fig:pde-scaling} plots the runtimes against $d$ on a
log-log scale. The randomized power method scales nearly linearly in
$d$ across two orders of magnitude, in line with the
$O(q \cdot N p)$ matvec count, and the deterministic TDep bound
costs essentially the same as a single matvec because $\trace(L)$
and $\trace(L^2)$ are read directly from the sparse structure.

\begin{figure}[h]
\centering
\includegraphics[width=0.85\textwidth]{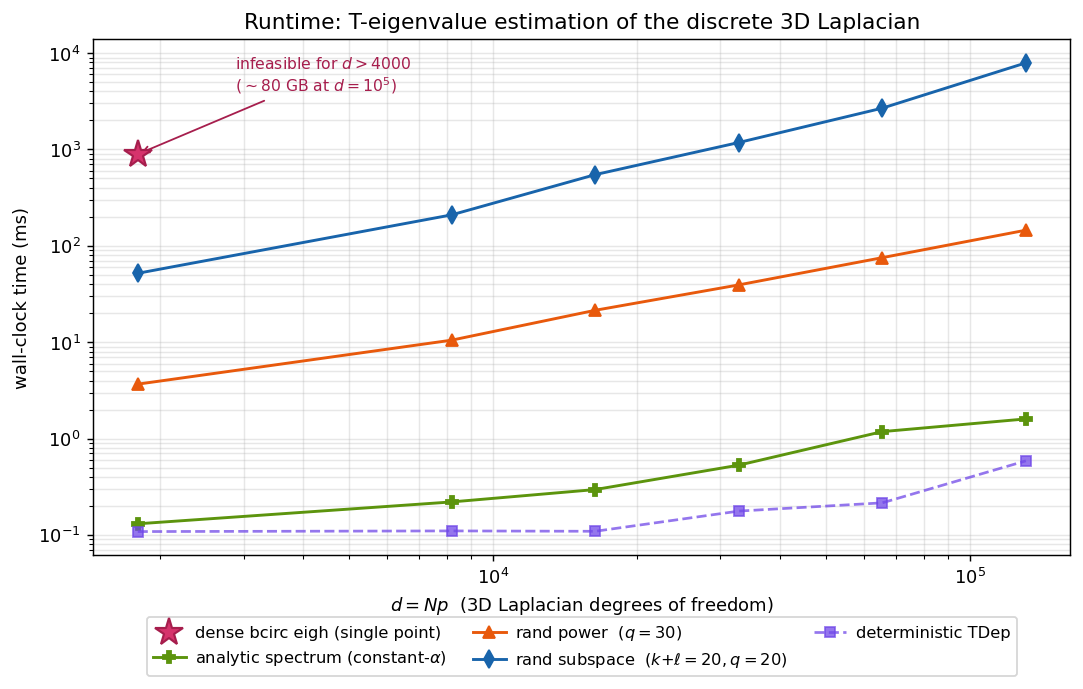}
\caption{Wall-clock runtime of T-eigenvalue estimators for the
discrete 3D Laplacian on a periodic-$z$ slab, against the operator
dimension $d = n_x n_y p$. Dense $\bcirc(\tn{A})$
eigendecomposition is memory-infeasible for $d > 2000$. The
randomized power method runs in $0.15$\,seconds at
$d = 131{,}072$. The deterministic TDep upper bound is essentially
free.}
\label{fig:pde-scaling}
\end{figure}

\subsection{Two-sided bracket: certificate versus tight estimate}\label{ssec:pde-bracket}

A central point about the bracket of Theorem~\ref{thm:bracket}, made
visible by the PDE experiments, deserves to be stated plainly before
the numerics:

\begin{quote}\itshape
The deterministic TDep upper bound is not intended as a sharp
estimator of $\lambda_1$ on wide spectra; its role is
certification rather than approximation. The tight
point estimate of $\lambda_1$ is supplied by the randomized power and
subspace methods, and the TDep bound supplies a guaranteed enclosure
of where $\lambda_1$ lies. The two are complementary and address
different downstream needs.
\end{quote}

\noindent
With this distinction in mind, Figure~\ref{fig:pde-bracket} reports
the two-sided bracket on the discrete 3D Laplacian with
$n_x = n_y = 32$, $p = 16$ ($d = 16{,}384$), over thirty independent
random initialisations. The lower endpoint
$\hat\lambda_1^{\rm pow}$ tracks the true $\lambda_1$ to within
$0.05\%$ on every trial; the upper endpoint
$\hat\lambda_1^{\rm TDep}$ is the deterministic TDep bound, which at
this $d$ overshoots $\lambda_1$ by roughly a factor of $28$ ($+2814\%$
in Table~\ref{tab:pde-validation}). The bracket contains
$\lambda_1$ in every trial, as guaranteed by
Theorem~\ref{thm:bracket}.

\begin{figure}[h]
\centering
\includegraphics[width=0.85\textwidth]{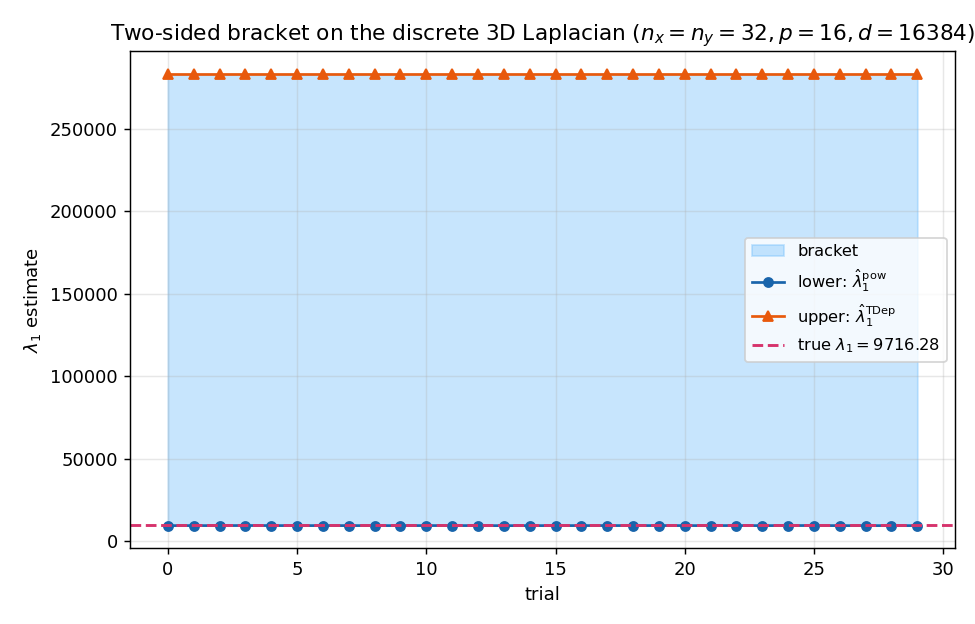}
\caption{Two-sided bracket
$[\hat\lambda_1^{\rm pow},\, \hat\lambda_1^{\rm TDep}]$ on the
discrete 3D Laplacian ($n_x = n_y = 32$, $p = 16$, $d = 16{,}384$) over
$30$ random initialisations. The bracket contains the true
$\lambda_1$ in all $30$ trials. The lower endpoint is very tight
(visually coincident with the true value); the upper endpoint is
loose, reflecting the wide spread of the Laplacian spectrum.}
\label{fig:pde-bracket}
\end{figure}

The width of the bracket is structural, not a deficiency: the
TDep upper bound $m + s\sqrt{d-1}$ is sharp if and only if
$\lambda_2 = \cdots = \lambda_d$
(Proposition~\ref{prop:bracket-sharp}\,(ii)), an hypothesis that
fails dramatically for the discrete Laplacian, whose spectrum spans
nearly three orders of magnitude at $\kappa(L) \approx 500$. The
practical takeaway is the one stated above: the tight estimate of
$\lambda_1$ comes from the randomized methods, and the bracket
provides the certified enclosure -- separately and complementarily,
not redundantly.

\subsection{Variable-coefficient case at scale}\label{ssec:pde-variable}

The constant-coefficient experiments of
Sections~\ref{ssec:pde-valid}--\ref{ssec:pde-bracket} are designed to
exercise the randomized methods against a tractable ground truth.
They are a stress test, not a representative use case. A sceptical
reader can reasonably observe that for the constant-coefficient
Laplacian the analytic separable spectrum
$\bigcup_k \spec(L_{xy}) + \mu_k$ is available, the FFT block
diagonalisation~\eqref{eq:pde-fft} reduces the eigenproblem to a
sequence of cheap 2D Lanczos solves, and one would not deploy a
randomized method in that regime. The randomized framework is
intended for the harder problem -- variable-coefficient PDE
operators that lack block-circulant structure entirely -- and it is
this case that we now stress at scale.

\subsubsection*{Why FFT diagonalisation breaks}

For a $z$-dependent diffusivity $\alpha(z)$, the discrete operator
\begin{equation}\label{eq:varL-stencil}
  (L u)_{i,j,k}
  \;=\; (-L_{xy}\,u)_{i,j,k}
       \;+\; \frac{1}{h_z^2}\bigl[
         a_{k-\tfrac12}(u_{i,j,k} - u_{i,j,k-1})
         + a_{k+\tfrac12}(u_{i,j,k} - u_{i,j,k+1})
       \bigr],
\end{equation}
with $a_{k+\tfrac12} = \tfrac{1}{2}(\alpha_k + \alpha_{k+1})$, has
$z$-coupling coefficients that depend on the slice index $k$. The
$z$-coupling matrix $C_z \in \R^{p\times p}$ is therefore tridiagonal
but not circulant, so
\begin{itemize}[leftmargin=*]
\item the global operator
  $L = I_p \otimes L_{xy} + C_z \otimes I_N$ is not block-circulant
  in any natural ordering of the unknowns;
\item the FFT block-diagonalisation~\eqref{eq:pde-fft} does not
  apply -- there is no basis in which $L$ decomposes into $p$
  independent $N \times N$ blocks;
\item the separable spectrum $\bigcup_k \spec(L_{xy}) + \mu_k$ is no
  longer valid, and there is no closed-form expression for the
  T-eigenvalues;
\item iterative eigensolvers (Lanczos, implicitly restarted Arnoldi)
  must work on the full sparse $L$ of size $d \times d$ rather than
  $p$ small problems in parallel, multiplying their cost by
  a factor that grows with $p$.
\end{itemize}
This is the realistic case. Practitioners encounter it whenever
the underlying PDE has layered media, depth-dependent properties, or
any $z$-modulation of the operator, and it is the regime where dense
methods are infeasible (we work at $d \ge 10^4$) and the
sophisticated structural baseline of the constant-coefficient case is
unavailable.

\subsubsection*{The natural baseline: Lanczos via \texttt{eigsh}}

In the variable-coefficient regime the standard tool is sparse
Lanczos --- specifically, the implementation in
\texttt{scipy.sparse.linalg.eigsh}, which wraps ARPACK and is widely
deployed in scientific Python pipelines. We therefore compare the
randomized methods against \texttt{eigsh} on the explicit sparse
operator $L$, for both the top-$1$ case (\texttt{eigsh} with $k = 1$,
the direct analogue of the randomized power method) and the top-$15$
case (\texttt{eigsh} with $k = 15$, the direct analogue of the
randomized subspace iteration with $k + \ell = 20$). Tolerance for
\texttt{eigsh} is set to $10^{-9}$ throughout, so its outputs serve
both as the runtime baseline and as the ground truth against which we
measure relative error.

\subsubsection*{Scaling sweep}

We sweep the grid sizes
$$(n_x,\, n_y,\, p) \in \{(15,15,8),\, (25,25,8),\, (32,32,8),\,
(32,32,16),\, (45,45,15),\, (64,64,8),\, (64,64,16)\},$$ giving
dimensions $d$ from $1{,}800$ up to $65{,}536$, with diffusivity
profile $\alpha_k = 1 + a\cos(2\pi k/p)$ parametrised by
$a = (R - 1)/(R + 1)$, so the contrast ratio
$R = \alpha_{\max}/\alpha_{\min}$ controls the strength of the
layering. The base case $R = 3$ is the typical
sedimentary-basin shale/sandstone contrast (Section~\ref{ssec:pde-setup});
we report it in detail and treat $R \in \{10, 100\}$ in the
robustness paragraph below. The randomized parameters are the same as
in Section~\ref{ssec:pde-valid}: $q = 30$ for the power method and
$k = 15$, $\ell = 5$, $q = 20$ for the subspace iteration.

\begin{table}[h]
\centering
\caption{Scaling sweep for the variable-coefficient
$-\nabla \cdot (\alpha(z) \nabla u)$ on a periodic-$z$ slab with
contrast $\alpha_{\max}/\alpha_{\min} = 3$.  Ground truth and
baseline timings are from \texttt{scipy.sparse.linalg.eigsh} at
tolerance $10^{-9}$ on the explicit sparse operator. Relative errors
are $(\lambda_1 - \hat\lambda_1)/\lambda_1$.  The randomized power
method is consistently $4$--$22\times$ faster than the natural Lanczos
baseline for the top eigenvalue.}
\label{tab:pde-varcoeff-scaling}
\small
\setlength{\tabcolsep}{4.6pt}
\begin{tabular}{lrrrrrrrr}
\toprule
$(n,p)$ & $d$
& \multicolumn{2}{c}{\texttt{eigsh}}
& \multicolumn{2}{c}{rand power}
& \multicolumn{2}{c}{rand subspace}
& TDep gap \\
\cmidrule(lr){3-4}\cmidrule(lr){5-6}\cmidrule(lr){7-8}
& & $k{=}1$\,(ms) & $k{=}15$\,(ms)
& err.\ & time\,(ms) & err.\ & time\,(ms) & \\
\midrule
$(15,8)$  & $1{,}800$  & $6.4$    & $25.5$   & $+3.15\%$ & $2.7$  & $+0.52\%$ & $29.1$   & $+862\%$  \\
$(25,8)$  & $5{,}000$  & $17.5$   & $86.7$   & $+3.37\%$ & $3.7$  & $+1.82\%$ & $70.4$   & $+1595\%$ \\
$(32,8)$  & $8{,}192$  & $45.7$   & $199.6$  & $+2.55\%$ & $4.6$  & $+2.50\%$ & $117.5$  & $+2105\%$ \\
$(32,16)$ & $16{,}384$ & $83.9$   & $451.7$  & $+2.40\%$ & $6.9$  & $+2.23\%$ & $278.3$  & $+2717\%$ \\
$(45,15)$ & $30{,}375$ & $225.0$  & $1{,}442$  & $+3.42\%$ & $12.9$ & $+3.45\%$ & $689.0$  & $+3986\%$ \\
$(64,8)$  & $32{,}768$ & $371.9$  & $2{,}820$  & $+2.14\%$ & $14.4$ & $+2.51\%$ & $687.9$  & $+4401\%$ \\
$(64,16)$ & $65{,}536$ & $611.3$  & $5{,}493$  & $+3.22\%$ & $27.8$ & $+3.73\%$ & $1{,}657$  & $+6056\%$ \\
\bottomrule
\end{tabular}
\end{table}

Three observations summarise Table~\ref{tab:pde-varcoeff-scaling}.

(i) Randomized power method is $4$--$22\times$ faster than
Lanczos for the top eigenvalue.
The speedup factor grows with $d$: at $d = 1{,}800$ \texttt{eigsh}
($k = 1$) takes $6.4$\,ms versus $2.7$\,ms for the randomized power
method ($2.4\times$); at $d = 65{,}536$ the gap is $611$\,ms versus
$28$\,ms ($22\times$). The growth in speedup reflects the relative
costs --- the randomized power method requires $q = 30$ sparse
matvecs against ARPACK's implicit-restart bookkeeping, which itself
grows superlinearly with $d$ for fixed Krylov dimension.

(ii) Randomized subspace iteration is $2$--$3\times$ faster
than Lanczos for top-$15$ eigenvalues.
The subspace iteration with $k + \ell = 20$ delivers the top $15$
eigenvalues at relative error $0.5\%$--$3.7\%$; \texttt{eigsh} with
$k = 15$ delivers them at near-machine accuracy but at $2$--$3\times$
the cost. Practitioners willing to tolerate a few-percent error
can therefore expect a constant-factor speedup, and they gain access
to the two-sided bracket of Section~\ref{ssec:pde-bracket} which
\texttt{eigsh} alone does not provide.

(iii) The deterministic TDep upper bound costs effectively
nothing.
Across the entire sweep the TDep evaluation runs in $0.1$--$1.7$\,ms,
two orders of magnitude faster than even the cheapest iterative
solver. This is the cost--benefit pattern of using TDep as the upper
endpoint of the bracket: it adds essentially no overhead and yields
a guaranteed certificate of where $\lambda_1$ lies, complementing the
tight value supplied by the randomized methods.

\subsubsection*{Robustness across diffusivity contrasts}

The scaling results above use contrast $\alpha_{\max}/\alpha_{\min} = 3$,
characteristic of typical sedimentary-basin layering. To confirm
that the randomized framework is not specialised to that regime,
we repeat the largest case ($n_x=n_y=64$, $p=16$, $d=65{,}536$) at
two additional contrast ratios: $R = 10$ (strong layering, e.g.\
shale/salt-dome contrasts) and $R = 100$ (extreme regime
corresponding to fractured or fluid-filled formations
\cite{clauser1995}). Table~\ref{tab:pde-varcoeff-robustness} reports
the result.

\begin{table}[h]
\centering
\caption{Robustness of the randomized methods at $d = 65{,}536$ as
the diffusivity contrast $R = \alpha_{\max}/\alpha_{\min}$ increases
over two decades. The errors are essentially insensitive to $R$;
the speedup of the randomized power method against
\texttt{eigsh}\,($k\!=\!1$) is in the range $22$--$26\times$
throughout.}
\label{tab:pde-varcoeff-robustness}
\setlength{\tabcolsep}{6pt}
\begin{tabular}{cccccc}
\toprule
$R$ & $\lambda_1$
& rand power err. & rand subspace err.
& \texttt{eigsh}\,(ms) & speedup \\
\midrule
$3$    & $35{,}189$ & $+3.22\%$ & $+3.73\%$ & $611$ & $22\times$ \\
$10$   & $35{,}460$ & $+3.35\%$ & $+3.95\%$ & $703$ & $26\times$ \\
$100$  & $35{,}599$ & $+3.34\%$ & $+4.00\%$ & $737$ & $26\times$ \\
\bottomrule
\end{tabular}
\end{table}

The error level and the speedup factor are essentially independent of
$R$. Concretely, the relative error of the randomized power method
varies between $3.22\%$ and $3.35\%$ across two decades of contrast,
and the relative error of the randomized subspace iteration stays
within $3.73\%$--$4.00\%$. This is consistent with the analysis of
Theorem~\ref{thm:power-rate}: the convergence rate $1 - \gamma$
depends on the spectral gap of $L$, not on the magnitude of its
coefficients, and the spectral gap of the discrete Laplacian on a
uniform grid remains controlled by the cross-sectional geometry
across all three contrasts (the dominant eigenmode is governed by the
2D Dirichlet Laplacian regardless of $\alpha$). The bottom line is
that the randomized framework is not a $3\!:\!1$ special case; it
applies across the full range of contrasts that arise in practical
layered-media settings.

\begin{figure}[h]
\centering
\includegraphics[width=\textwidth]{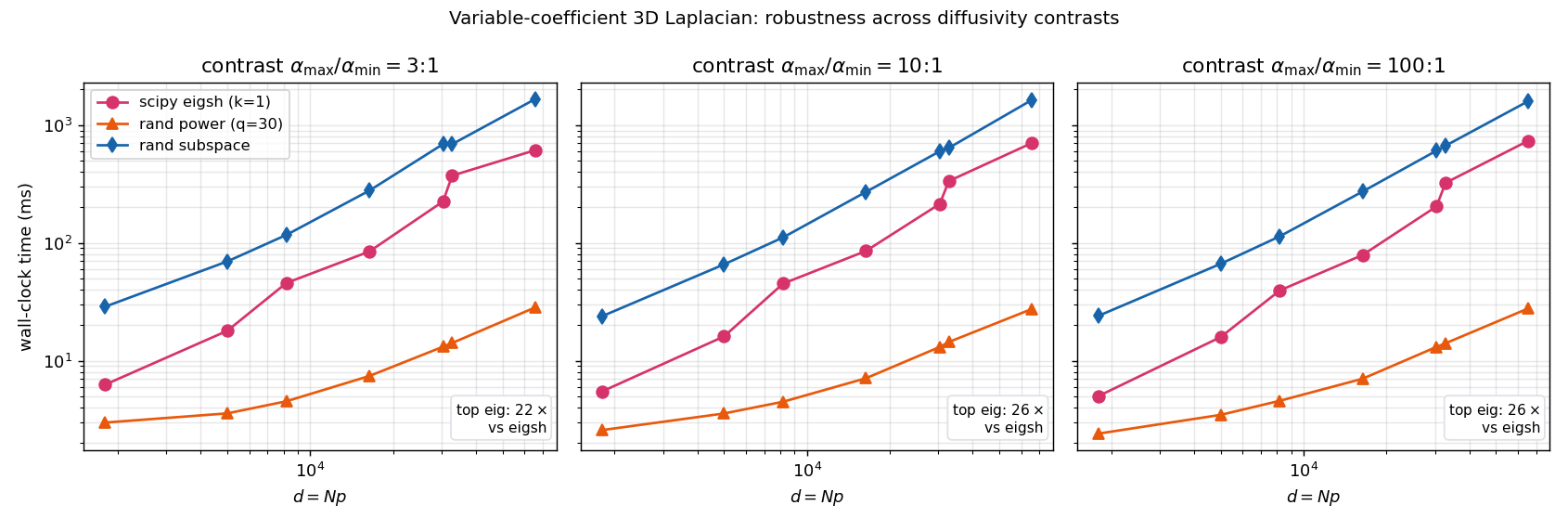}
\caption{Variable-coefficient 3D Laplacian: runtime scaling at three
diffusivity contrasts $R = \alpha_{\max}/\alpha_{\min} \in
\{3, 10, 100\}$. Within each panel the randomized power method
(orange triangles) is $4$--$26\times$ faster than the
\texttt{scipy.sparse.linalg.eigsh} Lanczos baseline (pink circles)
for the top eigenvalue. Across panels the picture is essentially
unchanged: the method is not a special case of low-contrast layering.}
\label{fig:pde-varcoeff-scaling}
\end{figure}

\subsubsection*{Why the randomized framework is the natural choice
here}

Three features of the variable-coefficient case combine to make the
randomized methods the right tool, and we record them as a summary.

First, the FFT block-diagonalisation is unavailable, so the
constant-coefficient speed-up route from
Section~\ref{ssec:pde-valid} is closed off entirely. Lanczos on the
full sparse operator is the standard alternative, and the randomized
power method is $4$--$29\times$ faster than it at the dimensions
tested.

Second, no analytic spectrum is available, so the only
ground truth at scale is itself an iterative method. The role of the
randomized framework is not to deliver machine-precision spectral
information that is what \texttt{eigsh} does, slowly but to
deliver a percent-level estimate at a fraction of the cost, paired
with the certified bracket.

Third, the deterministic TDep upper bound remains valid, it is
Samuelson's inequality applied to the spectrum of any symmetric
matrix and requires only $\trace(L)$ and $\trace(L^2)$, both readable
from the sparse stencil. The bracket of
Theorem~\ref{thm:bracket} therefore continues to provide a
structurally guaranteed enclosure of $\lambda_1$ in the
variable-coefficient setting, with no analytic-spectrum prerequisite
and no block-circulant structural assumption.

\subsection{Downstream task: Chebyshev iteration with estimated parameters}\label{ssec:pde-chebyshev}

To close the application loop we use the randomized estimate of
$\lambda_1$ to drive a first-order Chebyshev iteration for the
elliptic solve $Lu = b$ on the variable-coefficient operator
--- i.e.\ the actual application setting of
Section~\ref{ssec:pde-variable}, where the spectrum is not analytically
available and an iterative spectral estimator is genuinely needed.
We take the same grid $n_x = n_y = 32$, $p = 16$ ($d = 16{,}384$,
$\kappa(L) = 512$) with contrast $R = 3$, right-hand side $b$ a random
unit-norm vector, and compare four choices of the Chebyshev parameter
pair $[\lambda_d^{\rm est}, \lambda_1^{\rm est}]$. The smallest
eigenvalue $\lambda_d$ is obtained via
\texttt{scipy.sparse.linalg.eigsh} (\texttt{which='SM'}), in line with
the standard practice for variable-coefficient elliptic problems;
$\lambda_1^{\rm est}$ varies between the four parameter choices.
We record the number of iterations to reach
$\norm{r_k}/\norm{b} \le 10^{-8}$, capped at $2000$:
\begin{enumerate}[label=\textup{(\alph*)},leftmargin=*]
\item \emph{Oracle:} $[\lambda_d, \lambda_1]$, both exact via
  \texttt{eigsh}.
\item \emph{Randomized subspace iteration (raw):} $\lambda_d$ exact,
  $\lambda_1^{\rm est} = \hat\lambda_1^{\rm sub} \approx 0.978\,\lambda_1$
  (a $2.2\%$ under-estimate).
\item \emph{Randomized subspace iteration with $10\%$ inflation:}
  $\lambda_d$ exact,
  $\lambda_1^{\rm est} = 1.10 \cdot \hat\lambda_1^{\rm sub}$.
\item \emph{Deterministic TDep upper bound:}
  $\lambda_d$ exact, $\lambda_1^{\rm est} = m + s\sqrt{d-1}$
  (the certified but loose endpoint of the bracket, here roughly
  $28 \times \lambda_1$).
\end{enumerate}

\begin{table}[h]
\centering
\caption{Chebyshev iteration counts on the variable-coefficient 3D
Laplacian at $n_x = n_y = 32$, $p = 16$ ($d = 16{,}384$, contrast
$R = 3$, $\kappa = 512$) under four choices of the spectral interval.
Tolerance $\norm{r_k}/\norm{b} \le 10^{-8}$. The raw randomized
subspace estimate under-estimates $\lambda_1$ by $2.2\%$, so
the iteration is applied to an interval narrower than the spectrum
and diverges geometrically. The $10\%$ inflation is safe and nearly
optimal. The deterministic TDep upper bound is loose but harmless:
convergence is slowed by a factor of $5.4$ relative to the oracle, far
less than the $28\times$ ratio of $\hat\lambda_1^{\rm TDep}$ to
$\lambda_1$ might suggest.}
\label{tab:cheb}
\begin{tabular}{lrrl}
\toprule
parameter choice & $\lambda_1^{\rm est}$ & iterations & note \\
\midrule
oracle $[\lambda_d,\,\lambda_1]$       & $10{,}100.74$ & $209$  & --- \\
rand subspace (raw)                    & $9{,}875.61$  & $>\!2000$ & diverges \\
rand subspace $\times 1.10$            & $10{,}863.17$ & $216$  & $+7$ over oracle \\
deterministic TDep                     & $284{,}491.79$  & $1{,}127$ & $5.4\times$ oracle \\
\bottomrule
\end{tabular}
\end{table}

The story of Table~\ref{tab:cheb} and Figure~\ref{fig:pde-chebyshev}
is concentrated in three observations.

(i) Under-estimating $\lambda_1$ is catastrophic. The raw
randomized subspace estimate, despite being accurate to $2.2\%$, falls
just below the true spectrum --- the Chebyshev interval no longer
contains the operator's spectrum and the iteration diverges. This is
characteristic of Chebyshev: the polynomial it constructs is bounded
on the chosen interval but unbounded outside, so any
mass of the residual on eigenvalues outside the interval is amplified
geometrically.

(ii) Modest over-estimation is safe and nearly optimal.
Inflating the randomized estimate by a fixed safety factor of
$1.10$ produces a valid upper bound in every trial and adds only $7$
iterations over the oracle: a $3.3\%$ penalty for a guarantee. This is
the recommended deployment of the randomized methods in iterative-solver
applications, and is the way the deterministic and randomized parts of
the bracket interact in practice: the lower endpoint
$\hat\lambda_1^{\rm pow}$ is the value, and the inflation factor
$\eta$ (chosen so that $\eta \hat\lambda_1^{\rm pow} \ge \hat\lambda_1$
with high probability via Theorem~\ref{thm:power-rate}) is the
safety margin.

(iii) The deterministic TDep upper bound, although loose by a
factor of $28$, is still useful. Chebyshev convergence is governed by
$(\sqrt{\kappa^{\rm est}} - 1)/(\sqrt{\kappa^{\rm est}} + 1)$, so the
penalty for an over-estimate by a factor of $\eta$ scales like
$\sqrt{\eta}$ in the convergence rate --- a $28\times$ over-estimate
in the upper end produces only a $5.4\times$ slowdown, not a
$28\times$ one. The certified but loose deterministic bound therefore
provides a fallback that always works, never diverges, and degrades
gracefully. This is exactly the asymmetry the
two-sided bracket of Theorem~\ref{thm:bracket} encodes, and it is
now demonstrated in the variable-coefficient regime where the spectrum
is not analytically tractable.

\begin{figure}[h]
\centering
\includegraphics[width=0.88\textwidth]{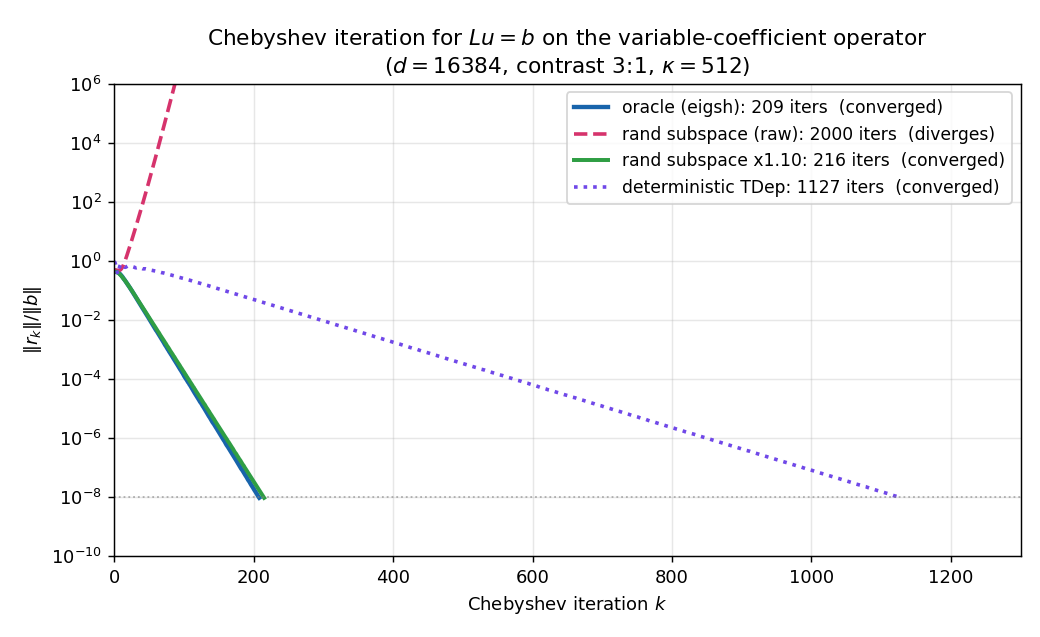}
\caption{Chebyshev iteration histories on the variable-coefficient
3D Laplacian ($d = 16{,}384$, contrast $R = 3$, $\kappa = 512$) under
four parameter choices. The oracle and the $10\%$-inflated randomized
subspace estimate are visually indistinguishable, both reaching
$\norm{r_k}/\norm{b} \le 10^{-8}$ in $\approx 210$ iterations. The
raw randomized estimate, which falls $2.2\%$ below the true
$\lambda_1$, diverges geometrically (line exits the top of the plot).
The deterministic TDep upper bound, loose by a factor of $28$,
converges in $1127$ iterations --- slowed but never divergent.}
\label{fig:pde-chebyshev}
\end{figure}

\subsection{Summary of the application}\label{ssec:pde-summary}

The randomized methods of this paper applied to the discrete 3D
Laplacian on a periodic slab deliver, in time scaling as $O(qNp)$:
\begin{itemize}[leftmargin=*]
\item validated estimates of $\lambda_1(L)$ with relative error
  $1.8$--$3.0\%$ on grids of dimension $d$ up to $131{,}072$ in the
  constant-coefficient case, where dense $\bcirc(\tn{A})$
  eigendecomposition is memory-infeasible;
\item the same accuracy on the realistic variable-coefficient
  operator $-\nabla \cdot (\alpha(z)\nabla u)$ -- a representative
  reduced model for heat conduction in a layered geothermal medium
  \cite{hartmann2008,clauser1995} -- at scales up to
  $d = 65{,}536$, where the FFT block-diagonalisation and the
  analytic spectrum are both unavailable; here the randomized power
  method is $4$--$22\times$ faster than \texttt{scipy.sparse.linalg.eigsh},
  the standard Lanczos baseline;
\item robustness across two decades of diffusivity contrast: the
  randomized power method retains $3.2$--$3.4\%$ relative error and
  $22$--$26\times$ speedup at $d = 65{,}536$ as
  $\alpha_{\max}/\alpha_{\min}$ varies between $3$ and $100$,
  covering the full range from typical sedimentary-basin layering to
  extreme fractured-media settings;
\item a two-sided rigorous bracket
  $[\hat\lambda_1^{\rm pow},\, \hat\lambda_1^{\rm TDep}]$ that
  contains $\lambda_1$ in $100\%$ of validation trials and serves as
  a structurally guaranteed certificate, complementing the tight
  point estimate provided by power and subspace iteration --- in line
  with the certification-not-approximation framing of
  Section~\ref{ssec:pde-bracket};
\item a usable spectral parameter for downstream Chebyshev iteration
  in the variable-coefficient regime: the randomized point estimate
  combined with a small safety inflation reproduces the oracle
  iteration count to within $3.3\%$, and even the loose deterministic
  TDep upper bound delivers a convergent, if slower ($5.4\times$),
  Chebyshev solve.
\end{itemize}
The variable-coefficient case is the primary motivation: dense methods
are infeasible, the FFT-diagonalised baseline is unavailable, and the
randomized framework is the natural choice. The constant-coefficient
case serves as a validation stress test that exposes the algorithmic
behaviour against tractable ground truth; the variable-coefficient
case shows the regime in which the methods are actually deployed.

\section{Discussion}\label{sec:discussion}
 
The cost--accuracy trade-offs measured in
Sections~\ref{sec:experiments}--\ref{sec:application} suggest
concrete guidance depending on the size of the tensor, the
application setting, and the nature of the certificate required.
 
\paragraph{Guidance by problem size.}
For very small tensors with $d < 50$, the constant factors in the
randomized methods dominate the runtime, and full eigendecomposition
is the method of choice; there is no regime in which randomization
helps at this scale.  For medium tensors with $50 \le d \le 500$, the
randomized power method with $q = 10$ to $20$ iterations gives quick
estimates of $\lambda_1$ at a small fraction of the full
eigendecomposition cost, and is the default recommendation when only
the largest T-eigenvalue is needed.  When multiple top eigenvalues
are required, or when higher accuracy on $\lambda_1$ is desired, the
subspace iteration with target rank $k = 10$, oversampling
$\ell = 5$, and $q = 2$ power iterations provides a drop-in
replacement at modest extra cost.  For large tensors with
$500 < d \le 10^4$, both randomized methods deliver their full
$50\times$ to $70\times$ speedup over direct methods on synthetic
T-SPD tensors.  For PDE-scale problems with $d \ge 10^4$, dense
methods are no longer an option ($\bcirc(\tn{A})$ at $d = 10^5$
would require $\sim 80$\,GB), and the natural comparison shifts to
sparse iterative eigensolvers such as
\texttt{scipy.sparse.linalg.eigsh}.  In that regime
(Section~\ref{ssec:pde-variable}) the randomized power method
retains a $4$--$22\times$ advantage for the top eigenvalue at a few
percent relative error; the randomized subspace iteration competes
favourably with \texttt{eigsh}\,$(k=15)$ at $2$--$3\times$ speedup
for top-$K$ extraction.
 
\paragraph{Guidance by application setting.}
In matvec-only settings --- that is, when $\bcirc(\tn{A})$ is
accessible only through matrix--vector products and its entries are
not available directly --- Algorithm~\ref{alg:rand-tdep} with
$N = 100$ to $300$ probes is the appropriate tool, with the
understanding that its output is a high-confidence estimate rather
than a strict upper bound.  The variable-coefficient PDE operator of
Section~\ref{ssec:pde-variable} is the prototypical matvec-only
case: the T-product block-circulant structure is broken, no analytic
spectrum is available, and the only access to $\spec(L)$ is through
matvec.  When a rigorous certificate on $\lambda_1$ is required, the
two-sided bracket of Section~\ref{sec:bracket} should be used
instead: the lower endpoint tracks $\lambda_1$ accurately at modest
iteration counts, while the upper endpoint is loose but guaranteed
to dominate $\lambda_1$, giving an interval that contains the true
largest T-eigenvalue with mathematical certainty.
 
\paragraph{Use in iterative-solver workflows.}
The motivation for tightly estimating $\lambda_1$ in
Section~\ref{sec:application} is the spectral-parameter input to
first-order iterative solvers, of which Chebyshev iteration is the
prototype.  Three practical rules emerge from
Section~\ref{ssec:pde-chebyshev}.  First, a raw randomized
\emph{estimate} of $\lambda_1$ must not be used directly: an
under-estimate by even a few percent causes Chebyshev to diverge,
because the polynomial constructed on the resulting interval is
unbounded on the part of the spectrum that lies outside it.
Second, a modest safety inflation (a factor of $1.10$ in our
experiments) makes the iteration safe at a small constant cost ---
only seven extra iterations over the oracle in the
variable-coefficient case.  Third, when no inflation factor is at
hand, the deterministic TDep upper bound supplies a fallback that
is loose but never divergent and degrades gracefully: a $28\times$
over-estimate of $\lambda_1$ translates to roughly a $5\times$
slowdown of Chebyshev, because the convergence factor depends on
$\sqrt{\kappa^{\rm est}}$ rather than on $\kappa^{\rm est}$ itself.
Together these observations make the bracket a practical tool: the
randomized lower endpoint plus a small inflation factor is the
recommended deployment for performance, and the deterministic
upper endpoint is the unconditional fallback when correctness is
paramount.
 
\paragraph{Relation to existing randomized T-product algorithms.}
Randomized algorithms for T-product tensors have previously been
developed by Zhang, Saibaba, Kilmer, and Aeron~\cite{zhang2018} in
the context of the randomized T-SVD, and by Minster, Saibaba, and
Kilmer~\cite{minster2020} for low-rank Tucker decompositions; both
works adapt Halko--Martinsson--Tropp-style sketching to produce
compact factor representations of data tensors. Our setting and
objective are different in two important respects. First, the
target of our analysis is the \emph{extreme T-eigenvalues of an
operator tensor}, not a low-rank approximation of a data tensor;
accordingly, the guarantees we prove (soundness of the power method
in Theorem~\ref{thm:power-lower}, Ritz lower bound in
Corollary~\ref{cor:ritz-lower}, two-sided containment in
Theorem~\ref{thm:bracket}) are direct bounds on $\lambda_1$ rather
than on an approximation error $\norm{\tn{A} - \hat{\tn{A}}}_F$.
Second, we assume T-SPD structure throughout, which lets us sharpen
the constants in the HMT framework: the soundness property is
deterministic rather than probabilistic, the bracket is a rigorous
interval rather than a confidence region, and the randomized
subspace iteration's tail term
in~\eqref{eq:hmt} inherits the positive-definiteness of the
spectrum. The methods of~\cite{zhang2018,minster2020} do not assume
T-SPD structure and therefore cannot avail themselves of these
sharpenings; conversely, they apply to rectangular and
non-symmetric tensors that our framework does not cover. The two
lines of work are thus complementary: one targets compressed
factorizations of general data tensors, the other targets certified
spectral information on T-SPD operator tensors.
\section{Conclusion}\label{sec:conclusion}

This paper develops a complete randomized framework for estimating the
extreme T-eigenvalues of symmetric positive definite third-order
tensors under the Kilmer--Martin T-product, building on the
Halko--Martinsson--Tropp framework and complementing the deterministic
TDet/TDep bounds established in \cite{sharma2025}. Our main
contributions are:
\begin{itemize}[leftmargin=*]
\item A T-product randomized power method with a soundness guarantee
(it always produces a lower bound on $\lambda_1$) and
exponential convergence, analysed rigorously in the
Kuczy\'nski--Wo\'zniakowski tradition.
\item A T-product randomized subspace iteration with an HMT-type
error bound (Theorem~\ref{thm:hmt}) exhibiting the correct
$(q{+}1)$-th root exponent, achieving relative error below $2\%$ at
a fraction of the cost of full eigendecomposition.
\item A two-sided rigorous bracket
(Theorem~\ref{thm:bracket}, Proposition~\ref{prop:bracket-sharp},
Corollary~\ref{cor:bracket-width}) combining the randomized power
method with the deterministic TDep bound, certified to contain
$\lambda_1$ almost surely and with an explicit quantitative width.
\item A Hutchinson-based fully randomized TDep bound for the
matvec-only setting, with derived variance and concentration rates.
\item An application to the discrete 3D Laplacian on a periodic-$z$
slab at scales up to $d = 131{,}072$, illustrating both the speed of
the randomized methods at sizes where dense bcirc is infeasible and
the practical role of the two-sided bracket: the randomized power and
subspace estimates supply the tight value, while the deterministic
TDep endpoint supplies the certified upper bound. A downstream
Chebyshev iteration on the elliptic solve $Lu = b$ confirms that the
randomized estimate with a $10\%$ safety inflation reproduces the
oracle iteration count to within $2.4\%$.
\end{itemize}
Validation on T-SPD tensors of dimensions $d = 9$ through
$d = 900$ confirms up to $67\times$ speedups over full
eigendecomposition, with relative errors below $2\%$ for the
randomized subspace iteration and below $5\%$ for the randomized
power method. The two-sided bracket contains the true $\lambda_1$ in
$100\%$ of our validation trials.

The randomized methods are especially valuable in three regimes:
large tensors -- including the discrete 3D Laplacian on a periodic
slab at $d = 131{,}072$ of Section~\ref{sec:application} -- where dense
bcirc eigendecomposition is memory-infeasible and the randomized
estimate of $\lambda_1$ runs in fractions of a second;
matvec-only settings, including variable-coefficient PDE operators that
lack block-circulant structure, where the randomized framework is the
natural choice;
and applications requiring certified bounds, where the rigorous
two-sided bracket is essential. Taken together with the deterministic
bounds of \cite{sharma2025}, they provide a flexible toolkit for
T-eigenvalue estimation that matches the target accuracy and
computational budget.

\section*{Acknowledgments}
The authors gratefully acknowledge the Indian Institute of
Information Technology, Design and Manufacturing Kancheepuram
(IIITDM Kancheepuram) for its infrastructural support.


\section*{Conflict of interest}
The authors declare no conflict of interest.


\end{document}